\documentclass[11pt,a4paper]{amsart}
\usepackage[T1]{fontenc}    
\usepackage[utf8]{inputenc}
\usepackage{enumerate}
\usepackage{fullpage}
\usepackage{color}

\usepackage{mathtools}

\allowdisplaybreaks

\newcommand{\set}[1]{\left\lbrace #1\right\rbrace}
\newcommand{\abs}[1]{\left\lvert#1\right\rvert}
\newcommand{\qtq}[1]{\quad \text{#1}\quad }
\newcommand{\bj}[1]{\left(#1\right)}

\newtheorem{theorem}{Theorem}[section]

\newtheorem{lemma}[theorem]{Lemma}
\newtheorem{corollary}[theorem]{Corollary}

\theoremstyle{remark}
\newtheorem{remark}[theorem]{Remark}
\newtheorem*{remark*}{Remark}
\newtheorem{remarks}[theorem]{Remarks}
\newtheorem{example}[theorem]{Example}

\def\NN{\mathbb N}
\def\eps{\varepsilon }

\numberwithin{equation}{section}

\begin{document}

\title{Estimates of Lucas series and expansions in non-integer bases}

\author[V. Komornik]{Vilmos Komornik}
\address{Département de mathématique,
Université de Strasbourg,
7 rue René Descartes,
67084 Strasbourg Cedex, France}
\email{vilmos.komornik@math.unistra.fr}

\author[M. Pedicini]{Marco Pedicini}
\address{Università Roma Tre,
Dipartimento di Matematica e Fisica,
Largo S. Leonardo Murialdo n. 1, 00146 Roma RM, Italy}
\email{marco.pedicini@uniroma3.it}

\subjclass[2000]{Primary: 11B39, Secondary:  11A63}
\keywords{Fibonacci expansions, Lucas sequences}
\date{Version of 2026-06-27a}

\begin{abstract}
Special Fibonacci and Lucas type expansions proved  useful to ``correct'' some theorems on the number of expansions in non-integer bases \cite{BaiKomLor2021, KomLorPed2024, KomLorPed2025}.
The proof of a key lemma was non-constructive.
It was made constructive in the Fibonacci case by giving a precise estimate of the growth of the partial sums of the series of the inverses of the Fibonacci numbers \cite{KomPed2025}.
In this paper we extend this result to the general case.
\end{abstract}
\maketitle

\section{Introduction}\label{s1}

In integer bases every real number has at most two  expansions.
In non-integer bases the situation is completely different: if $M$ is a positive integer and $M<q<M+1$, then Lebesgue \emph{almost every} $x\in [0,\frac{M}{q-1}]$ has $2^{\aleph_0}$ expansions of the form
\begin{equation}\label{11}
x=\sum_{i=1}^{\infty}\frac{c_i}{q^i}, \quad (c_i)\in\set{0,1,\ldots,M}^{\NN}
\end{equation} 
\cite{Sid2003, DajDeV2007}.
Moreover, there exists a largest base $q_M$ such that if $q<q_M$, then \emph{every} $x\in (0,\frac{M}{q-1})$ has $2^{\aleph_0}$ expansions of the form \eqref{11} \cite{EggVanEyn1966, ErdJooKom1990, ErdHorJoo1991, Bak2014}.
For example, $q_1$ is the Golden ratio.
More generally, $q_{2p-1}=\bj{p+\sqrt{p^2+4p}}/2$ and $q_{2p}=p+1$ for $p=1,2,\ldots .$
Similar results hold for arbitrary ternary alphabets, too \cite{KomLaiPed2011,KomPed2017,Ste2020}.

If $M=2p-1$ is odd, then the last property no longer holds for $q=q_M$.
However, the property still holds for a closely related expansion with integer denominators.
Indeed, introducing the Lucas sequence
\begin{equation}\label{12}
F_0=0,\quad F_1=1,\qtq{and} F_k=p(F_{k-1}+F_{k-2})\qtq{for}k=2,3,\ldots
\end{equation}
(for simplicity we do not indicate their dependence on $p$), we have $F_{k+1}/F_k\to q_M$ as $k\to\infty$, and every $x\in (0,\sum_{i=1}^{\infty}\frac{M}{F_i})$ has $2^{\aleph_0}$ expansions of the form
\begin{equation*}
x=\sum_{i=1}^{\infty}\frac{c_i}{F_i}, \quad (c_i)\in\set{0,1,\ldots,M}^{\NN}
\end{equation*}
\cite{BaiKomLor2021, KomLorPed2024, KomLorPed2025}.

The convergence relation $F_{k+1}/F_k\to q_M$ easily follows from the explicit formula
\begin{equation*}
F_k=\frac{A^k-B^k}{A-B},\quad k=0,1,\ldots,
\end{equation*}
where 
\begin{equation*}
A:=q_M=\frac{p+\sqrt{p^2+4p}}{2}\qtq{and}B:=\frac{p-\sqrt{p^2+4p}}{2}=-\frac{p}{A}
\end{equation*}
are the solutions of the equation $x^2=px+p$, and from the obvious inequalities
\begin{equation*}
p<A<p+1\qtq{and}-1<B<0.
\end{equation*}

A main idea of the proof in \cite{BaiKomLor2021} was that in case $M=1$ we may remove from the sequence $(1/F_i)$ infinitely many elements so that the remaining sequence $(p_i)$ satisfies the inequalities
\begin{equation*}
p_n\le\sum_{i=n+1}^{\infty}p_i
\end{equation*}
for every $n$. 
The proof of this lemma was not constructive.
The following more precise estimates, obtained in \cite{KomPed2025}, imply that the removal of the elements $1/F_{2\cdot 3^n}$ from $(1/F_i)$ yields such a sequence $(p_n)$:

\begin{theorem}\label{t11}
Let $(F_i)$ denote the Fibonacci sequence.
If $k$ is a positive odd integer, then
\begin{equation*}
\sum_{i=k+2}^{3k+1}\frac{1}{F_i}<\frac{1}{F_k}<\sum_{i=k+2}^{3k+2}\frac{1}{F_i}.
\end{equation*}
\end{theorem}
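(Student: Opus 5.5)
Set $\varphi=(1+\sqrt5)/2$ and $\psi=-1/\varphi$, so that $F_i=(\varphi^i-\psi^i)/\sqrt5$; this is the case $p=1$ of the explicit formula above, with $A=\varphi$ and $B=\psi$. I will compare each $1/F_i$ with the geometric quantity $\sqrt5\,\varphi^{-i}$ and control the error exactly. The identity
\begin{equation*}
\frac{1}{F_i}=\frac{\sqrt5\,\varphi^{-i}}{1-(-1)^i\varphi^{-2i}}
\end{equation*}
shows that $1/F_i$ exceeds $\sqrt5\varphi^{-i}$ for odd $i$ and falls below it for even $i$, with relative error of order $\varphi^{-2i}$. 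The naive approximation of the tail is
\begin{equation*}
\sum_{i=k+2}^\infty \sqrt5\varphi^{-i}=\sqrt5\,\frac{\varphi^{-k-2}}{1-\varphi^{-1}}=\sqrt5\,\varphi^{-k},
\end{equation*}
because $1-\varphi^{-1}=\varphi^{-2}$. At leading order the full tail $\sum_{i\ge k+2}1/F_i$ therefore matches $1/F_k\approx\sqrt5\varphi^{-k}$. The whole statement is a second-order comparison: the correction terms of size about $\varphi^{-3k}$ have to be weighed against the truncation tails, which begin at index $3k+2$ or $3k+3$ and so are also of size about $\varphi^{-3k}$. This coincidence of orders explains the cut-offs $3k+1$ and $3k+2$.

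To make the comparison rigorous I would work with the exact differences rather than with expansions. Since $k$ is odd, $1/F_k-\sqrt5\varphi^{-k}=\sqrt5\varphi^{-k}\cdot\frac{\varphi^{-2k}}{1+\varphi^{-2k}}$, which lies strictly between $\sqrt5\varphi^{-3k}(1-\varphi^{-2k})$ and $\sqrt5\varphi^{-3k}$. For $i\ge k+2$ I would write $1/F_i-\sqrt5\varphi^{-i}=\sqrt5(-1)^i\varphi^{-3i}/(1-(-1)^i\varphi^{-2i})$. This is an alternating sequence whose absolute values decrease, and its first term, at $i=k+2$, is odd-indexed and hence positive. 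Summing gives $\sum_{i\ge k+2}(1/F_i-\sqrt5\varphi^{-i})=\sqrt5\varphi^{-3k-6}\cdot c$ with $c$ between about $1-\varphi^{-3}$ and $1$. I would pin $c$ down by bounding the series between two of its partial sums, or by an explicit geometric comparison with ratio $-\varphi^{-3}$.

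Both inequalities then reduce to a comparison of the form
\begin{equation*}
\frac1{F_k}-\sum_{i=k+2}^{N}\frac1{F_i}=\Bigl(\frac1{F_k}-\sqrt5\varphi^{-k}\Bigr)-\sum_{i\ge k+2}\Bigl(\frac1{F_i}-\sqrt5\varphi^{-i}\Bigr)+\sum_{i>N}\frac1{F_i},
\end{equation*}
with $N=3k+1$ for the left inequality and $N=3k+2$ for the right. Here $\sum_{i>N}1/F_i=1/F_{N-1}$ plus a small correction. In units of $\sqrt5\varphi^{-3k}$ the three terms are approximately
\begin{equation*}
1,\qquad -\varphi^{-6}\approx-0.056,\qquad +\varphi^{-N+1+3k}.
\end{equation*}
For $N=3k+1$ the last term is about $1$, so the total is about $1.94>0$, which gives the left inequality. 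For $N=3k+2$ the last term is about $\varphi^{-1}\approx0.618$, giving a total near $1.56>0$, which is the wrong sign for the right inequality.

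So the leading-order bookkeeping above is off, and this is the main obstacle. Because $(1/F_i)$ is not geometric to this precision, the tail identity $\sum_{i\ge m}\sqrt5\varphi^{-i}=\sqrt5\varphi^{-m+2}$ must be used carefully, and the claim that $1/F_k$ is close to the full tail needs checking. For $k=1$ the full tail is $\sum_{i\ge3}1/F_i\approx3.36-2=1.36>1=1/F_1$. So the tail exceeds $1/F_k$ by a quantity that converges, and the correct leading behaviour is $1/F_k\approx\sum_{i=k+2}^{N}$ with $N\approx3k$, not the full tail.

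I would therefore redo the computation using the exact identity
\begin{equation*}
\sum_{i=m}^{n}\frac{1}{F_i}\quad\text{versus}\quad\frac1{F_{m-2}}-\frac1{F_{n-1}}
\end{equation*}
together with its error terms. Those error terms are the cross terms $(-1)^i/(F_{i-2}F_{i-1}F_i)$-type quantities that come from $F_{i-1}F_{i+1}-F_i^2=(-1)^i$. Summing them telescopically produces the correction, which is of order $\sum_i \varphi^{-3i}\cdot$const and accumulates to size $\varphi^{-3k}$. That is exactly what fixes the endpoints $3k+1$ and $3k+2$.

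Concretely, the key step is the identity
\begin{equation*}
\frac1{F_{i-2}}-\frac1{F_{i-1}}-\frac1{F_i}=\frac{F_{i-1}F_i-F_{i-2}F_i-F_{i-2}F_{i-1}}{F_{i-2}F_{i-1}F_i}=\frac{(-1)^{i}}{F_{i-2}F_{i-1}F_i}
\end{equation*}
(up to sign conventions, to be checked). Iterating it turns $1/F_k-\sum_{i=k+2}^{N}1/F_i$ into a boundary term plus a sum of these cubic reciprocals. I would then verify the two strict inequalities by estimating that sum with explicit bounds, uniformly for odd $k$, and check the smallest cases $k=1,3$ numerically.
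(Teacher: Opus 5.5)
As written, the proposal proves neither inequality. Your first route is abandoned because of a sign error, and your second route stops exactly where the real work begins.

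\textbf{The sign error.} For odd $i$ one has $\sqrt5F_i=\varphi^i+\varphi^{-i}$, so $1/F_i<\sqrt5\varphi^{-i}$ (e.g.\ $1/F_1=1<\sqrt5\varphi^{-1}\approx1.38$), the opposite of what you state. In fact $e_i:=1/F_i-\sqrt5\varphi^{-i}=(-1)^i/(\varphi^{2i}F_i)$. This agrees with your displayed formula, but it makes $e_k$ and $e_{k+2}$ \emph{negative}. With the signs fixed, your three terms (in units of $\sqrt5\varphi^{-3k}$) are approximately $-1$, $+\varphi^{-6}/(1+\varphi^{-3})\approx+0.045$ and $+\varphi^{3k+1-N}$. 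The totals are then about $+0.045$ for $N=3k+1$ and $-0.34$ for $N=3k+2$, which are precisely the required signs. So there is no obstacle, and the diagnosis in your fourth paragraph is mistaken: $1/F_k$ does agree with the full tail to leading order. Your $k=1$ check is consistent with this, since the excess $\approx0.36$ is exactly $-e_1+\sum_{i\ge3}e_i$, a second-order quantity.

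\textbf{The fallback route.} Your fallback identity is correct; it is Lemma~\ref{l22} with $p=1$. It yields $1/F_k-\sum_{i=k+2}^N1/F_i=1/F_N+1/F_{N+1}-\sum_{i=k+1}^N(-1)^i/(F_{i-1}F_iF_{i+1})$. But ``estimating that sum with explicit bounds'' is the entire proof, and for $p=1$ it is delicate. With $N=3k+1$, majorizing the alternating sum by its first term $1/(F_kF_{k+1}F_{k+2})$, as in Lemma~\ref{l23}, is not enough. Already at $k=3$ one has $1/30>1/55+1/89$, and asymptotically the ratio of the two sides is $5\varphi^{-3}\approx1.18$. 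You would need three terms and a uniform inequality with only a few percent of room, and checking $k=1,3$ numerically does not supply it.

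\textbf{A repair.} The quickest fix is to finish your first route with the correct signs; it then gives a complete proof. Since $\sum_{i=k+2}^N\varphi^{-i}=\varphi^{-k}-\varphi^{1-N}$, one has exactly
\begin{equation*}
\frac1{F_k}-\sum_{i=k+2}^{N}\frac1{F_i}=\bigl(\sqrt5\,\varphi^{1-N}+e_k\bigr)-\sum_{i=k+2}^{N}e_i .
\end{equation*}
The $e_i$ alternate in sign, have strictly decreasing moduli $1/(\varphi^{2i}F_i)$, and start with $e_{k+2}<0$. Hence the last sum lies strictly between $e_{k+2}$ and $0$ whenever $N\ge k+3$.

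For $N=3k+1$ the bracket equals $\sqrt5\varphi^{-3k}\varphi^{-2k}/(1+\varphi^{-2k})>0$, which gives the left inequality.

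For $N=3k+2$, use the following facts:
\begin{itemize}
\item $-e_k=\sqrt5\varphi^{-3k}/(1+\varphi^{-2k})\ge\varphi^{1-3k}$, because $1+\varphi^{-2}=\sqrt5/\varphi$;
\item $|e_{k+2}|<\sqrt5\varphi^{-3k-6}$;
\item $\varphi-\sqrt5\varphi^{-1}=\varphi^{-3}$ and $\varphi^3=2+\sqrt5$.
\end{itemize}
Together they show the displayed quantity is $<-\varphi^{-3k-6}(\varphi^3-\sqrt5)=-2\varphi^{-3k-6}<0$, which gives the right inequality.

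The paper's route for Theorem~\ref{t12} combines the identity of Lemma~\ref{l22}, a Leibniz bound, and the closed forms of $F_{3k},F_{3k+1},F_{3k+2}$ from Lemmas~\ref{l24} and~\ref{l32}. The corrected Binet argument needs only Binet's formula, because the geometric part telescopes exactly and all the sign information sits in the alternating errors $e_i$.
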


The object of the present work is to extend Theorem \ref{t11}  to the Lucas sequences \eqref{12}; they similarly lead to a constructive proof of the main results of \cite{KomLorPed2024, KomLorPed2025}.

The method of our proof allows us to consider a larger class of Lucas sequences.
We fix two integers $p\ge q\ge 1$, and we define the sequence
\begin{equation}\label{13}
F_0=0,\quad F_1=1,\qtq{and} F_k=pF_{k-1}+qF_{k-2}\qtq{for}k=2,3,\ldots ;
\end{equation}
for simplicity we do not indicate their dependence on $p,q$.

Binet's formula readily extends to these sequences: introducing the solutions
\begin{equation*}
A:=\frac{p+\sqrt{p^2+4q}}{2}\qtq{and}B:=\frac{p-\sqrt{p^2+4q}}{2}=-\frac{q}{A}
\end{equation*}
of the equation $x^2=px+q$, we have $p<A<p+1$, $-1<B<0$, and
\begin{equation*}
F_k=\frac{A^k-B^k}{A-B}
\end{equation*}
for all $k\ge 0$.

First we consider the special case where $q=1$.

\begin{theorem}\label{t12}
If $p\ge 2$ and $q=1$, then
\begin{equation}\label{14}
\frac{p-1}{F_{k+1}}+\sum_{i=k+2}^{3k}\frac{p}{F_i}
<\frac{1}{F_k}
<\frac{p-1}{F_{k+1}}+\sum_{i=k+2}^{3k+1}\frac{p}{F_i}
\end{equation}
for all positive odd integers $k$.
\end{theorem}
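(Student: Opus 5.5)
The plan is to turn both sums in \eqref{14} into telescoping sums, so that \eqref{14} reduces to comparing a few terms of size about $A^{-3k}$. For $q=1$ the recurrence gives $F_{i+1}-F_{i-1}=pF_i$, and Binet's formula gives Cassini's identity $F_{i-1}F_{i+1}-F_i^2=(-1)^i$. Together they yield
\begin{equation*}
\frac{p}{F_i}=\Bigl(\frac{1}{F_{i-1}}-\frac{1}{F_{i+1}}\Bigr)\frac{F_{i-1}F_{i+1}}{F_i^2}
=\Bigl(\frac{1}{F_{i-1}}-\frac{1}{F_{i+1}}\Bigr)\Bigl(1+\frac{(-1)^i}{F_i^2}\Bigr).
\end{equation*}
Summing over $k+2\le i\le N$ with $N\in\set{3k,3k+1}$, the main parts telescope to $\frac{1}{F_{k+1}}+\frac{1}{F_{k+2}}-\frac{1}{F_N}-\frac{1}{F_{N+1}}$. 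What is left is an alternating error sum $E_N:=\sum_{i=k+2}^{N}\frac{(-1)^i p}{F_{i-1}F_{i+1}F_i}$, whose size is about $A^{-3k-6}$.

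Next I treat the left-hand side $\frac1{F_k}$ of the comparison with the same identity at $i=k+1$. Since $k$ is odd, $F_kF_{k+2}=F_{k+1}^2+1$, and therefore
\begin{equation*}
\frac{1}{F_k}=\frac{p}{F_{k+1}}+\frac{1}{F_{k+2}}-\frac{p}{F_{k+1}(F_{k+1}^2+1)}.
\end{equation*}
Substituting both formulas, \eqref{14} becomes equivalent to
\begin{equation*}
\frac{1}{F_{3k+1}}+\frac{1}{F_{3k+2}}+E_{3k+1}
<\frac{p}{F_{k+1}(F_{k+1}^2+1)}
<\frac{1}{F_{3k}}+\frac{1}{F_{3k+1}}+E_{3k}.
\end{equation*}
(The sign orientation has to be double-checked against \eqref{14}: the longer sum is the larger one.)

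Heuristically, write $F_j\approx A^j/(A-B)$ with $(A-B)^2=p^2+4$, and multiply through by $A^{3k}/(A-B)$. The inequality then reads
\begin{equation*}
\frac1A+\frac1{A^2}<\frac{p(p^2+4)}{A^3}<1+\frac1A,
\end{equation*}
where $A^3=(p^2+1)A+p$. Both inequalities have a comfortable margin. For $p=2$ the numbers are about $0.586<1.137<1.414$, and as $p\to\infty$ the middle term tends to $1$ while the outer ones tend to $0$ and $1^+$. The upper inequality tightens as $p$ grows, so it needs an exact algebraic check: $p(p^2+4)A<(A+1)A^3$, which I would verify by reducing with $A^2=pA+1$.

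The main obstacle is making this rigorous uniformly in $k$ and $p$. That requires exact two-sided bounds on $F_j$ from Binet, using $-1<B<0$: for instance $\frac{A^j-\abs{B}^j}{A-B}\le F_j\le\frac{A^j+\abs{B}^j}{A-B}$, with the sign of $B^j$ determined by the parity of $j$. It also requires bounding $\abs{E_N}$ by its first term, which is legitimate because $E_N$ is alternating with decreasing terms. One then checks that these relative errors, of order $A^{-2k}$, are absorbed by the margins computed above. I expect this to work for $k\ge3$, perhaps $k\ge 5$ when $p=2$. The remaining small cases ($k=1$, and possibly $k=3$) I would verify directly from the explicit values $F_1=1$, $F_2=p$, $F_3=p^2+1$, $F_4=p^3+2p$, $F_5=p^4+3p^2+1$, clearing denominators and checking polynomial inequalities in $p\ge2$.
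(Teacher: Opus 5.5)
Your reduction is exactly the paper's: the Cassini-based identity is Lemma~\ref{l22}, and telescoping with the $i=k+1$ term $M:=\frac{p}{F_kF_{k+1}F_{k+2}}=\frac{p}{F_{k+1}(F_{k+1}^2+1)}$ split off is Lemmas~\ref{l23} and~\ref{l31}. However, the sign of $E_N$ in your displayed chain is wrong. With $S_N:=\frac{p-1}{F_{k+1}}+\sum_{i=k+2}^{N}\frac{p}{F_i}$, your two formulas give $S_N-\frac{1}{F_k}=M+E_N-\frac{1}{F_N}-\frac{1}{F_{N+1}}$. Hence \eqref{14} is equivalent to
\[
\frac{1}{F_{3k+1}}+\frac{1}{F_{3k+2}}-E_{3k+1}<M<\frac{1}{F_{3k}}+\frac{1}{F_{3k+1}}-E_{3k}.
\]
Since $E_N<0$ (its first term, $i=k+2$, is negative), your left inequality with $+E_{3k+1}$ is strictly weaker than what the upper bound in \eqref{14} requires. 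Proving it as written would not give that bound. The repair is what the paper does: use $E_{3k}<0$ on the right, and the Leibniz bound $E_{3k+1}\ge -\frac{p}{F_{k+1}F_{k+2}F_{k+3}}$ on the left.

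Relatedly, $E_N$ is not a negligible error. After your rescaling by $A^{3k}/(A-B)$ it tends to $-\frac{c}{A^3+1}$ with $c:=\frac{p(p^2+4)}{A^3}$. This limit is independent of $k$ (about $-0.075$ for $p=2$), so it is not one of the $O(A^{-2k})$ errors. The limiting condition you actually need for the upper bound is $\frac1A+\frac1{A^2}+\frac{c}{A^3}<c$, i.e.\ about $0.667<1.137$ for $p=2$. It still holds with room, so the plan survives once the sign is fixed.

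Where you genuinely differ from the paper is the final comparison. The paper does not use Binet's formula for $q=1$ at all. From the addition formula it derives exact identities (Lemmas~\ref{l24} and~\ref{l32}):
\[
F_{3k}=F_k\bigl(F_{k-1}^2+F_k^2+F_{k-1}F_{k+1}+F_{k+1}^2\bigr),\quad F_{3k+1}=F_{k+1}^3+3F_k^2F_{k+1}-pF_k^3,\quad F_{3k+2}=pF_{k+1}^3+3F_kF_{k+1}^2+F_k^3.
\]
After clearing denominators, both comparisons become polynomial inequalities in $F_{k-1},F_k,F_{k+1}$. These follow from the recurrence and $pF_n\le F_{n+1}\le (p+1)F_n$, which handles all odd $k\ge1$ and all $p\ge2$ at once, with no error terms and no small cases.

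Your Binet route is viable. For $k\ge 3$ the relative errors are $O(A^{-6})$, well below the margins; the tighter margin, $1+\frac1A-c$, is of order $1/p$. The case $k=1$ reduces to $p^2<p^2+1$ and $p^2+2<p^3+p$. But the uniform-in-$p$ bookkeeping is only announced, and it is noticeably heavier. What your route buys is transparency about the limiting constants and robustness. It is essentially the method the paper itself has to use for $q\ge2$ in Sections~\ref{s4}--\ref{s5}, where neither Lemma~\ref{l22} nor such cubic identities are available.
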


Observe the difference between the cases $p=1$ and $p\ge 2$: the summation limits are $3k+1$ and $3k+2$ in Theorem \ref{t11}, and $3k$ and $3k+1$ in Theorem \ref{t12}.

Our next result shows that the summation limits are more complicated for $q\ge 2$.

\begin{theorem}\label{t13}
If $p\ge q\ge 2$, then we have
\begin{equation}\label{15}
\frac{p-1}{F_{k+1}}+\sum_{i=k+2}^{K_1(k)}\frac{p+q-1}{F_i}
<\frac{1}{F_k}
<\frac{p-1}{F_{k+1}}+\sum_{i=k+2}^{K_2(k)}\frac{p+q-1}{F_i}
\end{equation}
for all positive odd integers $k$,
where
\begin{equation*}
K_1(k)=
\left\lfloor\bj{3-\frac{\ln q}{\ln A}}k+\frac{\ln(p+q-1)}{\ln A}-1\right\rfloor
\end{equation*}
and
\begin{equation*}
K_2(k)=
\left\lceil\bj{3-\frac{\ln q}{\ln A}}k+\frac{\ln(p+q-1)-2\ln q}{\ln A}+1\right\rceil.
\end{equation*}
\end{theorem}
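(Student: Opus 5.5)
We plan to reduce \eqref{15} to a single telescoping identity and then estimate the remaining tail with Binet's formula. The starting observation is that, since $F_{i+1}=pF_i+qF_{i-1}$,
\begin{equation*}
\frac{1}{F_{i-1}}-\frac{1}{F_{i}}\cdot\frac{F_{i}-F_{i-1}}{F_{i-1}}=\frac{1}{F_i},
\end{equation*}
which suggests looking for an exact identity of the form
\begin{equation*}
\frac{1}{F_k}=\frac{p-1}{F_{k+1}}+\sum_{i=k+2}^{n}\frac{p+q-1}{F_i}+R_n(k)
\end{equation*}
with an explicit remainder $R_n(k)$. We will compute $R_n(k)$ directly. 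First, $\frac1{F_k}-\frac{p-1}{F_{k+1}}=\frac{F_{k+1}-(p-1)F_k}{F_kF_{k+1}}=\frac{F_k+qF_{k-1}}{F_kF_{k+1}}$. Then we subtract the terms $\frac{p+q-1}{F_i}$ one at a time, keeping track of the numerators. It is more transparent to write everything in Binet form. Put $F_i=(A^i-B^i)/(A-B)$ and $B=-q/A$, so that $B^i=(-q)^iA^{-i}$. For odd $k$, the leading behaviour of $\frac1{F_k}-\frac{p-1}{F_{k+1}}-\sum_{i\ge k+2}\frac{p+q-1}{F_i}$ cancels at order $A^{-k}$, because $\frac{p-1}{A}+\frac{p+q-1}{A^2(1-1/A)}=1$. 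This holds since $A^2=pA+q$ gives $(p-1)(A-1)+p+q-1=A^2-A+\dots$. We check this identity first. Its validity means the infinite series equals $\frac1{F_k}$ up to terms coming from the $B^i$ corrections.

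Second, we expand each $\frac1{F_i}=(A-B)A^{-i}\bigl(1+(B/A)^i+(B/A)^{2i}+\dots\bigr)$ and sum over $i$. The $A^{-i}$ parts cancel exactly, as in the previous step. The first-order parts then give a difference of size about $c\,(q/A^2)^{k}A^{-k}=c\,q^kA^{-3k}$, with an explicit constant $c$ depending on $p,q,A$ and a definite sign for $k$ odd. Here $(B/A)^k<0$ for odd $k$, and this fixes which side the full series falls on. Thus
\begin{equation*}
\frac1{F_k}-\frac{p-1}{F_{k+1}}-\sum_{i=k+2}^{\infty}\frac{p+q-1}{F_i}=\delta_k,\qquad \delta_k\approx c\,\frac{q^k}{A^{3k}},
\end{equation*}
and the second-order terms are smaller by a factor $(q/A^2)^k$, which is at most $1/p^{k}$.

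Third, the tail satisfies $\sum_{i=n+1}^\infty\frac{p+q-1}{F_i}\approx (p+q-1)\frac{A-B}{A-1}A^{-n}$. The right-hand inequality of \eqref{15} holds as soon as this tail is $<\delta_k$. The left-hand inequality holds as soon as the tail from $n+1$, with $n=K_1(k)$, exceeds $\delta_k$. Solving $A^{-n}\lessgtr \text{const}\cdot q^kA^{-3k}$ gives $n\approx(3-\ln q/\ln A)k+\text{const}/\ln A$. The constants $\ln(p+q-1)$ and $\ln(p+q-1)-2\ln q$, together with the $\pm1$ slack in $K_1,K_2$, should absorb the explicit factors $c$, $(A-B)/(A-1)$ and $A$.

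The main obstacle is this last step: making all approximations into rigorous two-sided inequalities. We must bound $c$, $A-B$, $A-1$ and the higher-order Binet corrections uniformly for all $p\ge q\ge2$ and all odd $k\ge1$, including $k=1$, so that they fit inside the generous floor/ceiling margins of $K_1$ and $K_2$. We would use $p<A<p+1$, $-1<B<0$ and $q\le p$ to obtain crude bounds such as $1\le(A-B)/(A-1)\le 2$, check the small case $k=1$ separately if needed, and verify that the resulting logarithmic error terms lie within $[-1,0]$ and $[0,1]$ after rounding.
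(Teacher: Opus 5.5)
Your heuristic is sound, and it is the mechanism behind the paper's proof as well. The $A^{-k}$-order terms cancel because $A^2=pA+q$. The first Binet correction in $t=q/A^2$ has size $q^kA^{-3k}$, the tail beyond $K$ has size $A^{-K}$, and Remark~\ref{r14} turns the comparison into $K_1,K_2$.

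Fix a sign slip first. With your definition $\delta_k<0$: its first-order part is $-(A-B)q^kA^{-3k}c$ with $c=1+(p-1)qA^{-3}-(p+q-1)q^2A^{-6}/(1+qA^{-3})>0$. Both conditions must therefore compare the tail with $-\delta_k$; as written, ``tail $<\delta_k$'' can never hold.

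The genuine gap is the final step, which is the whole substance of the theorem, and your plan for it would fail. There is no rounding slack. Since $\ln q/\ln A$ is irrational, along odd $k$ the numbers $q^kA^{K_1(k)-3k+1}$ and $q^{k+2}A^{K_2(k)-3k-1}$ come arbitrarily close to $p+q-1$. The shifts $-1$ and $1-2\ln q/\ln A$ in $K_1,K_2$ are not a cushion; they are what the leading-order constants demand, with little to spare.

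Concretely, in the worst case allowed by Remark~\ref{r14}, the left inequality of \eqref{15} reduces at first order to $c(A-1)<A$, and the right one to $c\,A(A-1)>q^2$.
For $p=q$:
\begin{itemize}
\item One finds $(c-1)(A-1)=1-4/p+O(p^{-2})$, so the first inequality holds with relative margin only $O(p^{-2})$.
\item $A(A-1)=(p-1)A+p$ exceeds $p^2$ only by $(p-1)(A-p)$, so the second has relative margin $O(1/p)$.
\item At $k=1$ that margin is of the same order as the second Binet correction $t^k\approx 1/p$, which you propose to neglect. For $p=q=2$, $k=1$ the paper's estimates indeed break down, and that case is checked directly in Example~\ref{e15}.
\end{itemize}
Hence any loss of a constant factor destroys the inequalities. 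This includes replacing $A-1$ by $p-1$ or by $A$, or invoking $(A-B)/(A-1)\le 2$. The factor $A-B$ is common to $\delta_k$ and to the tail and should simply cancel.

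What is missing is:
\begin{itemize}
\item[(a)] exact one-sided bounds. The paper gets them from Leibniz-type truncations: the series $\sum a_i$ of Lemma~\ref{l41}, and the alternating Binet expansions in Lemmas~\ref{l43}--\ref{l45}, \ref{l52} and \ref{l53};
\item[(b)] an exact check of the resulting inequalities as polynomial inequalities in $A$, reduced with $A^2=pA+q$. In the paper this is the positivity of $\Delta$ in Section~\ref{s4}, and \eqref{56}, \eqref{57} in the separate cases $p>q$, $p=q\ge 3$, and $p=q=2$ with $k\ge 3$.
\end{itemize}
Your infinite-series-minus-tail bookkeeping is a legitimate alternative to the paper's finite telescoping. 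The proof only exists, however, once (a) and (b) are carried out.
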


\begin{remark}\label{r14}
It follows from the definition that
\begin{align*}
K_1(k)&=\max\left\{K\in\mathbb Z: q^kA^{K-3k+1}\le p+q-1\right\}
\intertext{and}
K_2(k)&=\min\left\{K\in\mathbb Z: q^{k+2}A^{K-3k-1}\ge p+q-1\right\}.
\end{align*}
\end{remark}

\begin{example}\label{e15}
For $p=q=2$ and $k=1$ we have $K_1(k)=2$ and $K_2(k)=4$ by a straightforward computation.
Since $F_1=1$, $F_2=2$, $F_3=6$ and $F_4=16$,
 \eqref{15} takes the form
\begin{equation*}
\frac{1}{2}<1<\frac{1}{2}+\frac{3}{6}+\frac{3}{16};
\end{equation*}
its validity is obvious. 
\end{example}

\begin{remarks}\label{r16}\mbox{}
\begin{enumerate}[\upshape (i)]
\item The conclusion of Theorem \ref{t13} also holds if $p\ge 2$ and $q=1$, but this conclusion is weaker than Theorem \ref{t12}: $K_1(k)=3k-1$ and $K_2(k)=3k+2$,   instead of $3k$ and $3k+1$, respectively.

\item Note that $K_2(k)>K_1(k)\ge k+1$, and hence $K_2(k)\ge k+2$.
Indeed, the inequality $K_2(k)>K_1(k)$ is obvious, and the next one follows by using the relations $1\le q\le p<A$:
\begin{equation*}
\bj{2-\frac{\ln q}{\ln A}}k+\frac{\ln(p+q-1)}{\ln A}-1
\ge \bj{2-\frac{\ln q}{\ln A}}+\frac{\ln(p+q-1)}{\ln A}-1
\ge 2-1=1.
\end{equation*}
\end{enumerate}
\end{remarks}

\begin{remarks}\label{r17}\mbox{}
Let us focus on the special case $p=q\ge 2$.

\begin{enumerate}[\upshape (i)]
\item We have
\begin{equation*}
K_2(k)-K_1(k)\le 4-\frac{2\ln q}{\ln A}<3
\end{equation*}
because $q^2=p^2>p+1>A$.
Hence $K_2(k)=K_1(k)+1$ or $K_2(k)=K_1(k)+2$ for every odd integer $k\ge 1$, so that Theorem \ref{t13} is essentially optimal.

\item We consider the case $p=q=2$.
Table \ref{tab1} shows the values of 
\begin{equation*}
S_K:=\frac{p-1}{F_{k+1}}+\bj{\sum_{i=k+2}^{K}\frac{p+q-1}{F_i}}-\frac{1}{F_k}=\frac{1}{F_{k+1}}+\bj{\sum_{i=k+2}^{K}\frac{3}{F_i}}-\frac{1}{F_k}
\end{equation*}
for $K=K_1(k), K_1(k)+1, K_1(k)+2$ for those values of $k=1,3,\ldots,49$ for which $K_2(k)=K_1(k)+2$.

We see that in Theorem \ref{t13} sometimes, but not always, we may change $K_1(k)$ to $K_1(k)+1$, and sometimes, but not always, we may change $K_2(k)$ to $K_2(k)-1$.

\begin{table}[ht]
\begin{equation*}
\begin{array}{cccc}
k & S_{K_1(k)} & S_{K_1(k)+1} & S_{K_1(k)+2} \\
\hline\\
 1 & -0.5 & 0 & 0.1875 \\
 5 & -0.0000611432 & -1.04706\times 10^{-6} & 0.0000209497 \\
 9 & -8.06104\times 10^{-9} & -9.7428\times 10^{-10} & 1.61965\times 10^{-9} \\
 11 & -4.34549\times 10^{-11} & 3.10409\times 10^{-12} & 2.01459\times 10^{-11} \\
 15 & -5.88597\times 10^{-15} & -3.95559\times 10^{-16} & 1.61407\times 10^{-15} \\
 19 & -7.64562\times 10^{-19} & -1.17113\times 10^{-19} & 1.1987\times 10^{-19} \\
 21 & -4.24039\times 10^{-21} & 1.32618\times 10^{-23} & 1.57021\times 10^{-21} \\
 25 & -5.62757\times 10^{-25} & -6.11506\times 10^{-26} & 1.2245\times 10^{-25} \\
 27 & -3.00067\times 10^{-27} & 2.94819\times 10^{-28} & 1.50105\times 10^{-27} \\
 31 & -4.09664\times 10^{-31} & -2.10487\times 10^{-32} & 1.21194\times 10^{-31} \\
 35 & -5.34734\times 10^{-35} & -7.64636\times 10^{-36} & 9.12748\times 10^{-36} \\
 37 & -2.93954\times 10^{-37} & 7.12342\times 10^{-39} & 1.17325\times 10^{-37} \\
 41 & -3.92602\times 10^{-41} & -3.75605\times 10^{-42} & 9.23936\times 10^{-42} \\
 43 & -2.06885\times 10^{-43} & 2.63718\times 10^{-44} & 1.1175\times 10^{-43} \\
 47 & -2.84871\times 10^{-47} & -9.80565\times 10^{-49} & 9.08752\times 10^{-48} \\
\end{array}
\end{equation*}
\caption{Non-optimality of $K_1(k)$ or $K_2(k)$ for $k$'s with $K_2(k)=K_1(k)+2$ for $p=q=2$}
\label{tab1}
\end{table}

\item Let us write the formulas of $K_1(k)$ and $K_2(k)$ in the form
\begin{equation*}
K_1(k)=\left\lfloor \bj{3-\frac{\ln q}{\ln A}}k+\alpha_{p,q}\right\rfloor
\qtq{and}
K_2(k)=\left\lceil \left(3-\frac{\ln q}{\ln A}\right)k+\beta_{p,q}\right\rceil
\end{equation*}
for brevity.
Since
\begin{equation*}
p+q-1>A>1,\quad
\beta_{p,q}-\alpha_{p,q}=2-2\frac{\ln q}{\ln A}\qtq{and}q^2>A
\end{equation*} 
(see (i)), we have $0<\alpha_{p,q}<\beta_{p,q}<1$.
Therefore $K_2(k)=K_1(k)+2$ if and only if there exists an integer $n$ such that
\begin{equation*}
\bj{3-\frac{\ln q}{\ln A}}k+\alpha_{p,q}<n<\left(3-\frac{\ln q}{\ln A}\right)k+\beta_{p,q}.
\end{equation*}
We may rewrite this in the equivalent form
\begin{equation*}
\bj{3-\frac{\ln q}{\ln A}}k\in\bj{-\beta_{p,q},-\alpha_{p,q}}\quad\mod 1.
\end{equation*}
By a classical equidistribution theorem 
the density of the odd integers $k$ (among all positive odd integers) for which $K_2(k)=K_1(k)+2$, is equal to $\beta_{p,q}-\alpha_{p,q}=2-2\frac{\ln q}{\ln A}$.

\item Since $p=q$ and $p<A<p+1$, 
\begin{equation*}
\beta_{p,q}-\alpha_{p,q}=2-2\frac{\ln p}{\ln A}\to 0\qtq{as}p\to\infty,
\end{equation*}
so that the frequency of $k$'s for which Theorem \ref{t13} does not give the optimal values of $K_1(k)$ and $K_2(k)$, tends to zero as $p\to \infty$.
See Table \ref{tab2}.

\begin{table}[ht]
\begin{equation*}
\begin{array}{cccccc}
p=q & \text{density} & p=q & \text{density} & p=q & \text{density} \\
\hline\\
 2 & 0.6206747332 & 100 & 0.004270075079 & 200 &
   0.001871638313 \\
 3 & 0.3513055630 & 101 & 0.004219397806 & 201 &
   0.001860653889 \\
 4 & 0.2390904081 & 102 & 0.004169824553 & 202 &
   0.001849789198 \\
 5 & 0.1784855318 & 103 & 0.004121320646 & 203 &
   0.001839042345 \\
 6 & 0.1409409855 & 104 & 0.004073852824 & 204 &
   0.001828411472 \\
 7 & 0.1155940080 & 105 & 0.004027389174 & 205 &
   0.001817894761 \\
 8 & 0.09743655298 & 106 & 0.003981899055 & 206 &
   0.001807490430 \\
 9 & 0.08385101207 & 107 & 0.003937353046 & 207 &
   0.001797196735 \\
 10 & 0.07334153215 & 108 & 0.003893722879 & 208 &
   0.001787011964 \\
 11 & 0.06499406526 & 109 & 0.003850981385 & 209 &
   0.001776934444 \\
 12 & 0.05822008702 & 110 & 0.003809102447 & 210 &
   0.001766962533 \\
\end{array}
\end{equation*}
\caption{Density of $k$'s for which $K_2(k)=K_1(k)+2$}
\label{tab2}
\end{table}
\end{enumerate}
\end{remarks}

We end the paper with an application of Theorem \ref{t13} to expansions in non-integer bases.
Fix a positive integer $p$, and consider the Fibonacci or Lucas sequence associated with $p=q$.
We are investigating the expansions of the form
\begin{equation}\label{16}
x=\sum_{i=1}^{\infty}\frac{c_i}{F_i}\qtq{with}(c_i)\in\set{0,1,\ldots,2p-1}^{\NN}.
\end{equation}
The following theorem was obtained in \cite{BaiKomLor2021} for $p=q=1$ and in \cite{KomLorPed2024,KomLorPed2025} for $p=q\ge 2$:

\begin{theorem}\label{t18}
If $p=q\ge 1$, then every $x\in\bj{0,\sum_{i=1}^{\infty}\frac{2p-1}{F_i}}$ has $2^{\aleph_0}$ expansions of the form \eqref{16}.
\end{theorem}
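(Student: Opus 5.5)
Since the paper presents this as an application of Theorem \ref{t13}, I would prove Theorem \ref{t18} constructively by following the scheme of \cite{BaiKomLor2021}. The idea is to first build a "quasi-greedy" representation system with enough slack, and then branch at infinitely many places to get $2^{\aleph_0}$ expansions. I only treat $p=q\ge 2$ here; the case $p=q=1$ goes the same way with Theorem \ref{t11} in place of Theorem \ref{t13}. With $M=2p-1$, the recursion gives $F_k=p(F_{k-1}+F_{k-2})$.

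The key tool is a general interval-filling lemma. Let $(w_i)$ be positive weights with $w_i\to 0$, $\sum w_i<\infty$, and digit bounds $m_i$, where
\begin{equation*}
w_n\le \sum_{i>n} m_iw_i \quad\text{for all } n.
\end{equation*}
Then every $x\in[0,\sum m_iw_i]$ has an expansion $\sum c_iw_i$ with $c_i\in\{0,\dots,m_i\}$; the greedy algorithm produces it. If the inequality holds with room to spare, and we may also delete infinitely many terms while keeping it, then we get uncountably many expansions. To see this, pick a sparse set $N$ of indices and, for each subset of $N$, choose whether or not to use a unit of the corresponding weight. The remaining tail must still cover the residual, and the subsets give distinct digit sequences.

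The work is in producing the inequality from Theorem \ref{t13}. Write the full digit budget $2p-1$ as $(p-1)+p=(p-1)+(p+q-1)$ with $q=p$. For odd $k$, the right inequality of \eqref{15} shows that $1/F_k$ is dominated by a finite block of tail terms: coefficient $p-1$ at index $k+1$ and coefficient $p+q-1=2p-1$ at indices $k+2,\dots,K_2(k)$. This leaves a full unit of slack at $k+1$, since the digit there can be as large as $2p-1>p-1$. I would choose a lacunary sequence of odd indices $k_1<k_2<\cdots$ with $k_{j+1}>K_2(k_j)$ so that the blocks are disjoint. Then I would set aside one unit at each $k_j$ as a "free" digit and verify that the weights without these units still satisfy the interval-filling inequality. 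At the indices $k_j$ the inequality comes from \eqref{15}. At all other $n$ (including even $n$) it should follow from $F_{n+1}\le (p+1)F_n$ roughly, i.e. $1/F_n\le (2p-1)/F_{n+1}$ minus a small correction, because $(2p-1)/A\cdot\ldots$ gives a geometric tail $\sum_{i>n}(2p-1)/F_i\approx (2p-1)/(F_n(A-1))$. This exceeds $1/F_n$ with margin, since $A-1<2p-1$ for $p\ge 2$. The nontrivial point is that at $n=k_j$ one unit of $1/F_{k_j}$ has been removed, and Theorem \ref{t13} exactly guarantees that the shortened tail still covers it. For $n$ just below $k_j$, the missing unit lowers the tail slightly, and I would check this using the large-margin estimate.

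Finally, for $x$ in the open interval I would pick $j_0$ with $x$ strictly inside the range achievable with the reduced system beyond some point. For each $\epsilon\in\{0,1\}^{\NN}$ I would put $x_\epsilon=x-\sum_j\epsilon_j/F_{k_j}$ and expand $x_\epsilon$ greedily in the reduced system; this is possible because $x$ is an interior point, adjusting $j_0$ if needed. This gives distinct expansions, since different $\epsilon$ yield different digits at $k_j$ after combining. I expect the main obstacle to be that distinctness check: the free unit and the greedy digit at $k_j$ add together, so a different $\epsilon$ could produce the same total digit. I would handle this as in \cite{BaiKomLor2021}, restricting to infinitely many $j$ and arguing via the first index of difference. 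The other delicate step is the bookkeeping near the endpoints of the interval.
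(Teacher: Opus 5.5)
There is a genuine gap, and it is the step you flagged yourself: distinctness. A first-difference argument cannot repair it. Everything your scheme actually establishes rests only on the crude margin $\sum_{i>n}(2p-2)/F_i\ge(2-2/p)/F_n\ge 1/F_n$, which follows from $F_{i+1}\le(p+1)F_i$. The same margin holds for the geometric weights $A^{-i}$, where the analogue of Theorem \ref{t18} fails. Take $p=2$, base $A=1+\sqrt3$ and digits $\{0,1,2,3\}$. The point $x=1\in(0,\sqrt3)$ has only the countably many expansions $1\,3^\infty$, $2\,2\,0^\infty$, $2\,1\,1\,3^\infty$, $2\,1\,2\,2\,0^\infty,\ldots,(21)^\infty$. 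Yet in that base your reduced system (bound $2p-2$ at sparse $k_j$) is still interval-filling, $j_0$ exists, and every $x_\epsilon$ has a greedy expansion. So $\epsilon\mapsto c(\epsilon)+\epsilon$ sends a continuum into a countable set, and no argument built only from your ingredients can produce uncountably many distinct expansions. Appealing to the $p=1$ argument does not help: there a reserved unit is a whole term, so distinctness is automatic. A related symptom is that Theorem \ref{t13} does no work in your scheme. The inequality at $n=k_j$ involves only the tail beyond $k_j$, which the unit removed at $k_j$ does not touch, and it holds by the margin anyway.

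The missing idea can be read off from the shape of \eqref{17}: at index $n_j+1$ only $p-1$ units are available. So reserve a block of $p$ units there, i.e.\ lower the digit bound at $n_j+1$ to $p-1$ and take the free part to be $p\epsilon_j$. Then the total digit $d_{n_j+1}=c_{n_j+1}+p\epsilon_j$, with $0\le c_{n_j+1}\le p-1$, determines $\epsilon_j=\lfloor d_{n_j+1}/p\rfloor$. Hence distinct $\epsilon$ automatically give distinct expansions, and your $j_0$ and greedy steps go through unchanged. The price is that the interval-filling inequality at $n=n_j$ becomes exactly \eqref{17}, which is borderline. Replacing $1/F_i$ by $A^{n-i}/F_n$, the full tail $\frac{p-1}{F_{n+1}}+\sum_{i\ge n+2}\frac{2p-1}{F_i}$ equals $1/F_n$ to leading order, because $(p-1)(A-1)+2p-1=A(A-1)$. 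The inequality survives only through the $B^i$ corrections, for odd $n_j$ and with a finite cut-off. That is precisely the upper estimate of Theorem \ref{t13} once $n_{j+1}\ge K_2(n_j)$, as in Corollary \ref{c19}; when $p\ge 2$ all other indices are handled by your margin estimate. For $p=1$ this scheme is the deletion of the terms with index $n_j+1=2\cdot 3^j$. There the reserved index is even, and the delicate inequality sits at the odd index just before it, not at the reserved index as in your plan.
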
 

The proof of Theorem \ref{t18} was based on the following result: \emph{there exists an infinite sequence of indices $1\le n_1<n_2<\cdots$ such that}
\begin{equation}\label{17}
\frac{1}{F_{n_j}}<\frac{p-1}{F_{n_j+1}}+\sum_{i=n_j+2}^{n_{j+1}}\frac{2p-1}{F_i},\quad j=1,2,\ldots ;
\end{equation}
see the proof of \cite[Theorem 1.11]{KomLorPed2025}.
While in \cite{BaiKomLor2021,KomLorPed2024,KomLorPed2025} only the existence of such sequences $(n_j)$ was proved, 
Theorems \ref{t11} and \ref{t13} readily imply an explicit construction:

\begin{corollary}\label{c19}\
\begin{enumerate}[\upshape (i)]
\item If $p=q=1$, then the estimates \eqref{17} are satisfied with $n_j:= 2\cdot 3^j-1$.
\item If $p=q\ge 2$, then the estimates \eqref{17} are satisfied with the sequence $(n_j)$ defined by the following recursion, starting with $n_1=1$.
If $j\ge 2$, and $n_{j-1}$ has also been defined, then let $n_j$ be the smallest odd integer satisfying $n_j\ge K_2(n_{j-1})$.
\end{enumerate}
\end{corollary}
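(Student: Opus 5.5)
Both parts come from applying the right-hand inequalities of Theorems \ref{t11} and \ref{t13} with $k:=n_j$. We then enlarge the upper summation limit to $n_{j+1}$, which is harmless because all the added terms $1/F_i$ are positive. So the only things to check are that each $n_j$ is a positive odd integer, that the sequence is strictly increasing, and that the summation limit produced by the relevant theorem is at most $n_{j+1}$.

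\emph{Part (i).} Let $p=q=1$. Then $p-1=0$ and $2p-1=1$, so \eqref{17} reads
\begin{equation*}
\frac{1}{F_{n_j}}<\sum_{i=n_j+2}^{n_{j+1}}\frac{1}{F_i}.
\end{equation*}
The number $n_j=2\cdot 3^j-1$ is a positive odd integer. Applying the right-hand inequality of Theorem \ref{t11} with $k=n_j$ gives the displayed inequality with upper limit $3n_j+2$. A direct computation shows
\begin{equation*}
3n_j+2=6\cdot 3^j-1=2\cdot 3^{j+1}-1=n_{j+1},
\end{equation*}
so the upper limit is exactly $n_{j+1}$, as required. Monotonicity of $(n_j)$ is obvious.

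\emph{Part (ii).} Let $p=q\ge 2$, so that $p+q-1=2p-1$ and the right-hand side of \eqref{15} has exactly the shape of \eqref{17}.

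First we check that the recursion is well defined and produces the right kind of sequence, by induction on $j$:
\begin{itemize}
\item $n_1=1$ is a positive odd integer.
\item A smallest odd integer $n_j\ge K_2(n_{j-1})$ always exists.
\item By Remark \ref{r16}(ii), $K_2(k)\ge k+2$ for every positive odd $k$, so $n_j\ge K_2(n_{j-1})>n_{j-1}$.
\end{itemize}
Hence every $n_j$ is a positive odd integer and $1\le n_1<n_2<\cdots$.

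Now fix $j$ and apply the right-hand inequality of Theorem \ref{t13} with $k=n_j$. Since $n_{j+1}\ge K_2(n_j)$ and all terms are positive,
\begin{equation*}
\frac{1}{F_{n_j}}<\frac{p-1}{F_{n_j+1}}+\sum_{i=n_j+2}^{K_2(n_j)}\frac{2p-1}{F_i}\le\frac{p-1}{F_{n_j+1}}+\sum_{i=n_j+2}^{n_{j+1}}\frac{2p-1}{F_i},
\end{equation*}
which is \eqref{17}.

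No step is a real obstacle. The only points needing care are the arithmetic identity $3n_j+2=n_{j+1}$ in (i) and, in (ii), the use of Remark \ref{r16}(ii) to guarantee strict monotonicity and that the sum in \eqref{17} is nonempty.
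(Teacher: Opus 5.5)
Your proposal is correct and is essentially the argument the paper has in mind when it says Theorems \ref{t11} and \ref{t13} ``readily imply'' the corollary (the paper writes out no separate proof): apply the right-hand inequality with $k=n_j$. In case (i) use $3n_j+2=n_{j+1}$; in case (ii) use $n_{j+1}\ge K_2(n_j)\ge n_j+2$ from Remark \ref{r16}(ii), together with the positivity of the added terms.
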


\begin{remark}\label{r110}
In view of the explicit formula of $K_2(k)$ in Theorem \ref{t13} the following asymptotic relations hold:
\begin{equation*}
n_j\sim \bj{3-\frac{\ln p}{\ln A}}^{j-1},\quad j\to\infty.
\end{equation*}
Furthermore, 
\begin{equation*}
3-\frac{\ln p}{\ln A}\to 2\qtq{as}p\to\infty.
\end{equation*}
\end{remark}

We prove Theorem \ref{t12} in Sections \ref{s2}--\ref{s3}, and then Theorem \ref{t13} in Sections \ref{s4}--\ref{s5}.
The proof of Theorem \ref{t13} is more technical, and it will use a crucial idea from the proof of Theorem \ref{t12}.

\section{Proof of the lower estimate of Theorem \ref{t12}}\label{s2}

We fix $p\ge 2$ and $q=1$.
It follows from the definition that
\begin{equation}\label{21}
0=F_0<F_1<\cdots,\qtq{and}F_n\ge pF_{n-1}\qtq{for every}n\ge 1.
\end{equation}
We need four lemmas.
The first one is well-known; we recall a short proof for convenience:

\begin{lemma}\label{l21}
The following identities hold:
\begin{align}
&F_{i+j}=F_{i-1}F_j+F_iF_{j+1}\qtq{for all} i\ge 1\qtq{and} j\ge 0;\label{22}\\
&F_iF_{j+1}-F_{i+1}F_j=(-1)^jF_{i-j}\qtq{for all} i\ge j\ge 0.\label{23}
\end{align}
\end{lemma}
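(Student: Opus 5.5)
The statement to prove is Lemma~\ref{l21}, in the setting $q=1$, so that $F_{n+1}=pF_n+F_{n-1}$. For \eqref{22} I will induct on $j$ with $i\ge 1$ fixed. For $j=0$ the right-hand side is $F_{i-1}F_0+F_iF_1=F_i$, as required. For $j=1$ it is $F_{i-1}+pF_i=F_{i+1}$, using $F_2=p$. For the inductive step, assume \eqref{22} holds for $j-1$ and $j$. Then
\begin{equation*}
F_{i+j+1}=pF_{i+j}+F_{i+j-1}
=F_{i-1}(pF_j+F_{j-1})+F_i(pF_{j+1}+F_j)
=F_{i-1}F_{j+1}+F_iF_{j+2},
\end{equation*}
which is \eqref{22} for $j+1$. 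The key point is that the right-hand side of \eqref{22} satisfies the same recurrence in $j$ as the left-hand side.

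For \eqref{23} I will induct on $j$ with $i$ fixed, or equivalently on the difference $i-j$; the cleanest route is a two-step recurrence in $j$. For $j=0$ the left-hand side is $F_iF_1-F_{i+1}F_0=F_i$, which equals $(-1)^0F_{i-0}$. For $j=1$ it is $pF_i-F_{i+1}=-F_{i-1}$, which equals $(-1)^1F_{i-1}$. Now set $D_j:=F_iF_{j+1}-F_{i+1}F_j$. Since both $F_{j+1}$ and $F_j$ satisfy the recurrence in $j$, we get $D_{j+1}=pD_j+D_{j-1}$. It then suffices to check that $E_j:=(-1)^jF_{i-j}$ satisfies the same recurrence, namely $E_{j+1}=pE_j+E_{j-1}$.

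After dividing by $(-1)^{j+1}$, this becomes $F_{i-j-1}=-pF_{i-j}+F_{i-j+1}$, which is just the defining recurrence $F_{m+1}=pF_m+F_{m-1}$ rearranged, with $m=i-j$. This holds whenever $i-j\ge 1$, that is, for $j+1\le i$, which is exactly the range in which $D_{j+1}$ is needed. Two sequences satisfying the same second-order recurrence with the same two initial values coincide, so \eqref{23} follows for all $0\le j\le i$.

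An alternative would be to derive both identities from Binet's formula, or from the matrix identity
\begin{equation*}
\begin{pmatrix}p&1\\1&0\end{pmatrix}^n=\begin{pmatrix}F_{n+1}&F_n\\F_n&F_{n-1}\end{pmatrix}.
\end{equation*}
The induction above avoids both. There is no real obstacle here. The only point requiring care is the index range in \eqref{23}: the rearranged recurrence must be used only with $m=i-j\ge 1$, so that the sequence is never evaluated at a negative index.
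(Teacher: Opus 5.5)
Your proof is correct. For \eqref{22} you follow the paper exactly: base cases $j=0,1$, then induction on $j$ via the recurrence.

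For \eqref{23} the routes differ. The paper fixes the difference $i-j$ and shows that the left-hand side only changes sign under $(i,j)\mapsto(i+1,j+1)$, since
\[
\bigl(F_{i+1}F_{j+2}-F_{i+2}F_{j+1}\bigr)+\bigl(F_iF_{j+1}-F_{i+1}F_j\bigr)=F_{i+1}(F_{j+2}-F_j)-F_{j+1}(F_{i+2}-F_i)=0 .
\]
Everything therefore reduces to the single trivial base case $j=0$. You instead fix $i$ and show that both sides solve the same second-order recurrence in $j$ with matching values at $j=0,1$.

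Your version is the standard ``same recurrence, same initial data'' technique and is entirely mechanical. It costs a second base case, a backward use of the recurrence to check that $(-1)^jF_{i-j}$ satisfies it, and the index bookkeeping you flagged. That bookkeeping includes the tacit requirement $i\ge 1$ in your base case $j=1$, which is harmless because for $i=0$ only $j=0$ is needed. The paper's diagonal shift avoids all of this, since it only uses $F_{n+2}-F_n=pF_{n+1}$ at nonnegative indices.
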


\begin{proof}
The identity \eqref{22} reduces to the obvious relations $F_i=F_i$ and $F_{i+1}=F_{i-1}+pF_i$ for $j=0$ and $j=1$, and the general case follows by induction on $j$:
\begin{align*}
F_{i+j+1}
&=pF_{i+j}+F_{i+j-1}
=p\bj{F_{i-1}F_j+F_iF_{j+1}}+\bj{F_{i-1}F_{j-1}+F_iF_j}\\
&=F_{i-1}\bj{pF_j+F_{j-1}}+F_i\bj{pF_{j+1}+F_j}
=F_{i-1}F_{j+1}+F_iF_{j+2}.
\end{align*}

Since the identity \eqref{23} reduces to the obvious equality $F_i=F_i$ for $j=0$, it is sufficient to show that the left hand side only changes sign when we replace $(i,j)$ by $(i+1,j+1)$.
This is shown as follows:
\begin{align*}
\bj{F_{i+1}F_{j+2}-F_{i+2}F_{j+1}}+\bj{F_iF_{j+1}-F_{i+1}F_j}
&=F_{i+1}\bj{F_{j+2}-F_j}-F_{j+1}\bj{F_{i+2}-F_i}\\
&=F_{i+1}pF_{j+1}-F_{j+1}pF_{i+1}=0.\qedhere
\end{align*}
\end{proof}

\begin{lemma}\label{l22}
The following identity holds for all $i\ge 2$:
\begin{equation*}
\frac{p}{F_i}=\frac{1}{F_{i-1}}-\frac{1}{F_{i+1}}+(-1)^i\frac{p}{F_{i-1}F_iF_{i+1}}.
\end{equation*}
\end{lemma}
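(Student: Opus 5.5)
Multiplying through by $F_{i-1}F_iF_{i+1}$ (all positive for $i\ge 2$ by \eqref{21}) reduces Lemma \ref{l22} to an identity among integers. Clearing denominators, it is equivalent to
\begin{equation*}
pF_{i-1}F_{i+1}=F_iF_{i+1}-F_{i-1}F_i+(-1)^ip,
\end{equation*}
that is, to
\begin{equation*}
F_i\bj{F_{i+1}-F_{i-1}}-pF_{i-1}F_{i+1}=(-1)^{i+1}p.
\end{equation*}

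The first step uses the recursion \eqref{13} with $q=1$, which gives $F_{i+1}-F_{i-1}=pF_i$. The left hand side therefore becomes $p\bj{F_i^2-F_{i-1}F_{i+1}}$. It then suffices to prove the Cassini-type identity
\begin{equation*}
F_i^2-F_{i-1}F_{i+1}=(-1)^{i+1}\qtq{for all}i\ge 1.
\end{equation*}

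The second step obtains this from \eqref{23} of Lemma \ref{l21}, applied with $i$ replaced by $i$ and $j=i-1$ (admissible since $i\ge i-1\ge 0$). This yields
\begin{equation*}
F_iF_i-F_{i+1}F_{i-1}=(-1)^{i-1}F_1=(-1)^{i+1},
\end{equation*}
which is exactly what is needed. Alternatively, one could check the base case $F_1^2-F_0F_2=1$ directly and note that $F_{i+1}^2-F_iF_{i+2}=-\bj{F_i^2-F_{i-1}F_{i+1}}$, again using $F_{i+2}=pF_{i+1}+F_i$.

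There is no real obstacle here. The only care needed is with the sign bookkeeping, namely matching $(-1)^i$ in the statement against the sign $(-1)^{i+1}$ that appears after moving terms across, and with confirming that the index range $i\ge 2$ keeps $F_{i-1}\ne 0$ so that the division is legitimate.
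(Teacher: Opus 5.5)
Your proof is correct and follows essentially the same route as the paper: clear denominators, use the recursion, and finish with identity \eqref{23}. The only difference is cosmetic. You factor out $p$ via $F_{i+1}-F_{i-1}=pF_i$ and invoke the Cassini case $j=i-1$ of \eqref{23}, which produces $F_1=1$. The paper instead rewrites $pF_{i+1}+F_i=F_{i+2}$ and applies \eqref{23} to the pair $(i+1,i-1)$, which produces $F_2=p$.
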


\begin{proof}
Multiplying by the denominators we rewrite the identity in the form
\begin{equation*}
pF_{i-1}F_{i+1}=F_iF_{i+1}-F_{i-1}F_i+(-1)^ip.
\end{equation*}
Since $pF_{i+1}+F_i=F_{i+2}$, this is equivalent to
\begin{equation*}
F_{i-1}F_{i+2}=F_iF_{i+1}+(-1)^ip.
\end{equation*}
The last identity follows from the identity \eqref{23}:
\begin{equation*}
F_{i+1}F_i-F_{i+2}F_{i-1}=(-1)^{i-1}F_2
\end{equation*}
because $F_2=p$.
\end{proof}

%

\begin{lemma}\label{l23}
If $k\ge 1$ is odd, then
\begin{equation}\label{24}
\bj{\frac{p-1}{F_{k+1}}+\sum_{i=k+2}^{3k}\frac{p}{F_i}}-\frac{1}{F_k}
< \frac{p}{F_kF_{k+1}F_{k+2}}-\frac{1}{F_{3k}}-\frac{1}{F_{3k+1}}.
\end{equation}
\end{lemma}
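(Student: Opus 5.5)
The plan is to rewrite every term $p/F_i$ on the left-hand side of \eqref{24} using Lemma \ref{l22}. This splits the left-hand side into a telescoping part and an alternating error part. The telescoping part should reproduce the right-hand side of \eqref{24} exactly, so the lemma reduces to showing that the alternating part is negative.

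First, for each $i$ with $k+2\le i\le 3k$ (all such $i\ge 3\ge 2$), Lemma \ref{l22} gives
\begin{equation*}
\sum_{i=k+2}^{3k}\frac{p}{F_i}
=\sum_{i=k+2}^{3k}\bj{\frac{1}{F_{i-1}}-\frac{1}{F_{i+1}}}+E,
\qtq{where}
E:=\sum_{i=k+2}^{3k}(-1)^i\frac{p}{F_{i-1}F_iF_{i+1}}.
\end{equation*}
The first sum telescopes with step two and equals
\begin{equation*}
\frac{1}{F_{k+1}}+\frac{1}{F_{k+2}}-\frac{1}{F_{3k}}-\frac{1}{F_{3k+1}}.
\end{equation*}
Hence the left-hand side of \eqref{24} equals
\begin{equation*}
\frac{p}{F_{k+1}}+\frac{1}{F_{k+2}}-\frac{1}{F_k}-\frac{1}{F_{3k}}-\frac{1}{F_{3k+1}}+E.
\end{equation*}
Next I apply Lemma \ref{l22} once more, with $i=k+1\ge 2$. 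Since $k$ is odd, $(-1)^{k+1}=1$, so
\begin{equation*}
\frac{p}{F_{k+1}}+\frac{1}{F_{k+2}}-\frac{1}{F_k}=\frac{p}{F_kF_{k+1}F_{k+2}}.
\end{equation*}
Therefore the left-hand side of \eqref{24} equals the right-hand side plus $E$, and it remains to prove $E<0$.

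The sum $E$ has $2k-1$ terms, an odd number. Its first index $k+2$ and its last index $3k$ are both odd, so the first and last terms are negative. By \eqref{21}, the products $F_{i-1}F_iF_{i+1}$ are positive and strictly increasing for $i\ge 2$. Hence the absolute values of the terms of $E$ strictly decrease. I group the terms as $(k+2,k+3),(k+4,k+5),\ldots,(3k-2,3k-1)$. Each pair is a negative term followed by a smaller positive term, so each pair sum is negative. The remaining term with $i=3k$ is also negative, so $E<0$. In the case $k=1$ there are no pairs and $E$ is just the single negative term with $i=3$.

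The only delicate point is bookkeeping: checking the telescoping boundary terms and the parity signs, especially that $k$ odd makes the correction term at $i=k+1$ come with a plus sign. No genuine analytic obstacle is expected.
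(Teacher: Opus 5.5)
Your proposal is correct and is essentially the paper's argument. The paper writes $\frac{p-1}{F_{k+1}}=\frac{p}{F_{k+1}}-\frac{1}{F_{k+1}}$ and applies Lemma~\ref{l22} to every $i$ from $k+1$ to $3k$ at once, then bounds the resulting Leibniz-type sum by its positive first term; that bound is exactly your identity at $i=k+1$ combined with your pairing argument showing $E<0$.
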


\begin{proof}
Using Lemma \ref{l22}, and  simplifying a resulting telescopic sum, we obtain that
\begin{align*}
\bj{\frac{p-1}{F_{k+1}}+\sum_{i=k+2}^{3k}\frac{p}{F_i}}-\frac{1}{F_k}
&=-\frac{1}{F_k}-\frac{1}{F_{k+1}}+\sum_{i=k+1}^{3k}\frac{p}{F_i}\\
&=-\frac{1}{F_k}-\frac{1}{F_{k+1}}+\sum_{i=k+1}^{3k}\bj{\frac{1}{F_{i-1}}-\frac{1}{F_{i+1}}+(-1)^i\frac{p}{F_{i-1}F_iF_{i+1}}}\\
&=-\frac{1}{F_{3k}}-\frac{1}{F_{3k+1}}+\sum_{i=k+1}^{3k}(-1)^i\frac{p}{F_{i-1}F_iF_{i+1}}.
\end{align*}
The last sum is a Leibniz type alternating sum  by \eqref{21}, and it is majorized by its first term because it is positive.
This completes the proof.
\end{proof}

For the lower estimate of Theorem \ref{t12} it is sufficient to show that the right hand side of the inequality \eqref{24} is negative.
For this we need another lemma:

\begin{lemma}\label{l24}
The following identity holds for all $k\ge 1$:
\begin{equation}\label{25}
F_{3k}=F_k\bj{F_{k-1}^2+F_k^2+F_{k-1}F_{k+1}+F_{k+1}^2}.
\end{equation}
\end{lemma}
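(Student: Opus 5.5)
We derive \eqref{25} from the addition formula \eqref{22} of Lemma \ref{l21}, applied twice, together with the recursion $F_{k+1}=pF_k+F_{k-1}$ (here $q=1$). The first step is to take $i=k$, $j=k$ in \eqref{22}:
\begin{equation*}
F_{2k}=F_{k-1}F_k+F_kF_{k+1}=F_k\bj{F_{k-1}+F_{k+1}}.
\end{equation*}
Next we take $i=k+1$, $j=k$ in \eqref{22}:
\begin{equation*}
F_{2k+1}=F_kF_k+F_{k+1}F_{k+1}=F_k^2+F_{k+1}^2.
\end{equation*}
The second application uses $i=2k$, $j=k$ (valid since $2k\ge 1$):
\begin{equation*}
F_{3k}=F_{2k-1}F_k+F_{2k}F_{k+1}.
\end{equation*}
Here $F_{2k-1}=F_{k-1}^2+F_k^2$, by the same identity with $(i,j)=(k,k-1)$. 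This requires $k\ge1$, which gives $j=k-1\ge 0$.

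Substituting these expressions, we get
\begin{equation*}
F_{3k}=F_k\bj{F_{k-1}^2+F_k^2}+F_k\bj{F_{k-1}+F_{k+1}}F_{k+1}
=F_k\bj{F_{k-1}^2+F_k^2+F_{k-1}F_{k+1}+F_{k+1}^2},
\end{equation*}
which is exactly \eqref{25}. Note that the recursion itself is never needed explicitly beyond its role in proving \eqref{22}, so the whole argument is a short algebraic manipulation.

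The only care needed is to check the index ranges in \eqref{22}, namely $i\ge1$ and $j\ge0$, for each of the three instances. The case $k=1$ is covered because $F_0=0$ is allowed, so $F_{2k-1}=F_1=1=F_0^2+F_1^2$. As a sanity check at $k=1$, both sides of \eqref{25} equal $F_3=p^2+1$: indeed $F_1\bj{0+1+0+p^2}=p^2+1$. I expect no genuine obstacle; the main point is choosing the split $3k=2k+k$ rather than $k+2k$, so that the factor $F_k$ appears in both terms.
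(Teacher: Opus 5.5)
Your proof is correct and is essentially the paper's own argument. You split $F_{3k}=F_{2k-1}F_k+F_{2k}F_{k+1}$ via \eqref{22} and then substitute $F_{2k-1}=F_{k-1}^2+F_k^2$ and $F_{2k}=F_k(F_{k-1}+F_{k+1})$, both again from \eqref{22}; your computation of $F_{2k+1}$ is unused but harmless.
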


\begin{proof}
Using the identity \eqref{22} we have the following equalities:
\begin{align*}
&F_{3k}=F_{2k-1}F_k+F_{2k}F_{k+1},\\
&F_{2k-1}=F_{k-1}^2+F_k^2,\\
&F_{2k}=F_k\bj{F_{k-1}+F_{k+1}}.
\end{align*}
Combining them we get \eqref{25}.
\end{proof}

\begin{proof}[Proof of the lower estimate of Theorem \ref{t12}]
As mentioned above, it is sufficient to establish the inequality
\begin{equation*}
\frac{p}{F_kF_{k+1}F_{k+2}}-\frac{1}{F_{3k}}-\frac{1}{F_{3k+1}}<0.
\end{equation*}
Since $F_{3k+1}\le (p+1)F_{3k}$ by \eqref{13} and \eqref{21},
it is sufficient to establish the stronger estimate
\begin{equation*}
\frac{p}{F_kF_{k+1}F_{k+2}}-\frac{1}{F_{3k}}-\frac{1}{(p+1)F_{3k}}<0.
\end{equation*}
Multiplying by $(p+1)F_{3k}F_kF_{k+1}F_{k+2}$, this is equivalent to
\begin{equation*}
L:=(p+2)F_{k+1}F_{k+2}-p(p+1)\frac{F_{3k}}{F_k}>0.
\end{equation*}
Applying Lemma \ref{l24} we obtain that
\begin{equation*}
L=(p+2)F_{k+1}F_{k+2}-p(p+1)\bj{F_{k-1}^2+F_k^2+F_{k-1}F_{k+1}+F_{k+1}^2}.
\end{equation*}
Using the identity $F_{k+2}=pF_{k+1}+F_k$ we eliminate $F_{k+2}$ to get
\begin{equation*}
L=pF_{k+1}^2+(p+2)F_kF_{k+1}-p(p+1)\bj{F_{k-1}^2+F_k^2+F_{k-1}F_{k+1}}.
\end{equation*}
Using the inequality $F_{n+1}\ge pF_n$ first with $n=k-1$, and later with $n=k$, we obtain that
\begin{align*}
L&\ge pF_{k+1}^2+(p+2)F_kF_{k+1}-p(p+1)\bj{p^{-2}F_k^2+F_k^2+p^{-1}F_kF_{k+1}}\\
&=pF_{k+1}^2+F_kF_{k+1}-p(p+1)\bj{p^{-2}+1}F_k^2\\
&\ge \left[p^3+p-p(p+1)\bj{p^{-2}+1}\right]F_k^2\\
&=p^3\left[1-\frac{1}{p}-\frac{1}{p^3}-\frac{1}{p^4}\right]F_k^2.
\end{align*}
Since $p\ge 2$, all factors of the last product are positive, and hence $L>0$, as required.
\end{proof}

\section{Proof of the upper estimate of Theorem \ref{t12}}\label{s3}

As in the preceding section, we fix $p\ge 2$ and $q=1$.

\begin{lemma}\label{l31}
If $k\ge 1$ is odd, then
\begin{equation}\label{31}
\bj{\frac{p-1}{F_{k+1}}+\sum_{i=k+2}^{3k+1}\frac{p}{F_i}}-\frac{1}{F_k}
>\frac{p}{F_kF_{k+1}F_{k+2}}-\frac{p}{F_{k+1}F_{k+2}F_{k+3}}-\frac{1}{F_{3k+1}}-\frac{1}{F_{3k+2}}.
\end{equation}
\end{lemma}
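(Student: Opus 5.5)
We follow the proof of Lemma \ref{l23} exactly, changing only the upper summation limit from $3k$ to $3k+1$. First we rewrite $\frac{p-1}{F_{k+1}}-\frac{1}{F_k}$ as $-\frac{1}{F_k}-\frac{1}{F_{k+1}}+\frac{p}{F_{k+1}}$. The left hand side of \eqref{31} then becomes
\begin{equation*}
-\frac{1}{F_k}-\frac{1}{F_{k+1}}+\sum_{i=k+1}^{3k+1}\frac{p}{F_i}.
\end{equation*}
Next we replace each term $p/F_i$ (note $i\ge k+1\ge 2$) by the expression given in Lemma \ref{l22}. The telescoping part is
\begin{equation*}
\sum_{i=k+1}^{3k+1}\bj{\frac{1}{F_{i-1}}-\frac{1}{F_{i+1}}}=\frac{1}{F_k}+\frac{1}{F_{k+1}}-\frac{1}{F_{3k+1}}-\frac{1}{F_{3k+2}}.
\end{equation*}
Hence the left hand side of \eqref{31} equals
\begin{equation*}
-\frac{1}{F_{3k+1}}-\frac{1}{F_{3k+2}}+\sum_{i=k+1}^{3k+1}(-1)^i\frac{p}{F_{i-1}F_iF_{i+1}}.
\end{equation*}

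It remains to bound the alternating sum from below by its first two terms. Since $k$ is odd, the term with $i=k+1$ has sign $+$ and equals $\frac{p}{F_kF_{k+1}F_{k+2}}$. The term with $i=k+2$ has sign $-$ and equals $-\frac{p}{F_{k+1}F_{k+2}F_{k+3}}$. The sum has $2k+1$ terms, so its last term (with $i=3k+1$, an even index) is positive.

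By \eqref{21} the moduli $\frac{p}{F_{i-1}F_iF_{i+1}}$ decrease strictly in $i$. We therefore group the remaining terms, for $i=k+3,\ldots,3k+1$, as follows. If that range is nonempty, it has $2k-1$ terms and starts with a positive term. We pair the terms $(i,i+1)$ for $i=k+3,k+5,\ldots,3k-1$; each pair is positive by the monotonicity. The single term left over at $i=3k+1$ is also positive. So the tail after the first two terms is strictly positive.

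When $k=1$ the range is $i=2,3,4$, and the tail consists of the single term $i=4$, which is positive. In every case the tail beyond the first two terms is strictly positive, which gives the strict inequality \eqref{31}.

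The only point needing care is this Leibniz-type tail estimate: we must check the parity of the starting index and the number of terms, so that the remainder is a strictly positive quantity. The rest is the same telescoping computation as in Lemma \ref{l23}.
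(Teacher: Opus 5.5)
Your proposal is correct and follows the paper's proof: the same rewriting of the left-hand side, the same telescoping via Lemma \ref{l22}, and the same bound of the alternating sum from below by its first two terms. The paper states that last step in one line; your pairing of the tail for $i=k+3,\dots,3k+1$ spells it out.
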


\begin{proof}
Similarly to the proof of Lemma \ref{l23}, now we obtain that
\begin{align*}
\bj{\frac{p-1}{F_{k+1}}+\sum_{i=k+2}^{3k+1}\frac{p}{F_i}}-\frac{1}{F_k}
&=-\frac{1}{F_k}-\frac{1}{F_{k+1}}+\sum_{i=k+1}^{3k+1}\frac{p}{F_i}\\
&=-\frac{1}{F_k}-\frac{1}{F_{k+1}}+\sum_{i=k+1}^{3k+1}\bj{\frac{1}{F_{i-1}}-\frac{1}{F_{i+1}}+(-1)^i\frac{p}{F_{i-1}F_iF_{i+1}}}\\
&=-\frac{1}{F_{3k+1}}-\frac{1}{F_{3k+2}}+\sum_{i=k+1}^{3k+1}(-1)^i\frac{p}{F_{i-1}F_iF_{i+1}}.
\end{align*}
Now the last sum is \emph{minorized} by the sum of its first two terms, and this yields the estimate \eqref{31}.
\end{proof}

For the upper estimate of Theorem \ref{t12} it is sufficient to show that the right hand side of the inequality \eqref{31} is \emph{positive}.

We need two further identities:

\begin{lemma}\label{l32}
The following identities hold for all $k\ge 0$:
\begin{align}
F_{3k+1}&=F_{k+1}^3+3F_k^2F_{k+1}-pF_k^3;\label{32}\\
F_{3k+2}&=pF_{k+1}^3+3F_kF_{k+1}^2+F_k^3.\label{33}
\end{align}
\end{lemma}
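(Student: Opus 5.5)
We derive both identities from the addition formula \eqref{22} of Lemma \ref{l21}, in the same way as Lemma \ref{l24}. The three ingredients are the case $q=1$ of the recursion, $F_{k-1}=F_{k+1}-pF_k$, and the instances of \eqref{22} for doubling. Taking $i=k+1$ and $j=k$ in \eqref{22} gives
\begin{equation*}
F_{2k+1}=F_k^2+F_{k+1}^2 .
\end{equation*}
Taking $i=k$ and $j=k$ gives
\begin{equation*}
F_{2k}=F_k\bj{F_{k-1}+F_{k+1}}=F_k\bj{2F_{k+1}-pF_k}.
\end{equation*}
These hold for $k\ge 1$. For $k=0$ both identities \eqref{32}--\eqref{33} reduce to $F_1=1$ and $F_2=p$, so we check that case directly.

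For \eqref{32}, we apply \eqref{22} with $i=2k+1$ and $j=k$:
\begin{equation*}
F_{3k+1}=F_{2k}F_k+F_{2k+1}F_{k+1}.
\end{equation*}
Substituting the two doubling formulas gives
\begin{equation*}
F_k^2\bj{2F_{k+1}-pF_k}+\bj{F_k^2+F_{k+1}^2}F_{k+1}=F_{k+1}^3+3F_k^2F_{k+1}-pF_k^3,
\end{equation*}
which is \eqref{32}.

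For \eqref{33}, we use \eqref{22} with $i=2k+1$ and $j=k+1$:
\begin{equation*}
F_{3k+2}=F_{2k}F_{k+1}+F_{2k+1}F_{k+2}.
\end{equation*}
We then eliminate $F_{k+2}=pF_{k+1}+F_k$, which gives
\begin{equation*}
F_kF_{k+1}\bj{2F_{k+1}-pF_k}+\bj{F_k^2+F_{k+1}^2}\bj{pF_{k+1}+F_k}.
\end{equation*}
Expanding, the two terms $\mp pF_k^2F_{k+1}$ cancel, and what remains is $pF_{k+1}^3+3F_kF_{k+1}^2+F_k^3$, as claimed.

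Alternatively, once \eqref{32} is known, \eqref{33} can be obtained from $F_{3k+2}=pF_{3k+1}+F_{3k}$ using Lemma \ref{l24}. I prefer the direct route because it avoids $F_{k-1}$.

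There is no real obstacle here. The only care needed is at the index boundary ($k=0$, where $F_{k-1}$ is undefined), and in keeping track of the sign of the $pF_k^3$ term, which comes from writing $F_{k-1}=F_{k+1}-pF_k$. As a sanity check I would verify both identities numerically for $k=1$: \eqref{32} gives $F_4=p^3+2p$, and \eqref{33} gives $F_5=p^4+3p^2+1$, both of which match the recursion.
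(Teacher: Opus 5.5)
Your proof is correct and is essentially the paper's argument: both combine the addition formula \eqref{22} with the doubling formulas and then eliminate neighbouring terms via $F_{k+1}=pF_k+F_{k-1}$. The differences are cosmetic. The paper splits $3k+1=2k+(k+1)$, using $F_{2k-1}$, $F_{2k}$ and $F_{k+2}$, and derives \eqref{33} from $F_{3k+2}=pF_{3k+1}+F_{3k}$ together with Lemma \ref{l24}, which is exactly the alternative you mention; your split with $i=2k+1$ makes the algebra slightly shorter.
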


\begin{proof}
For $k=0$ the identities reduce to the obvious equalities $1=1$ and $p=p$.
Henceforth we assume that $k\ge 1$.
Using the identities \eqref{13} and \eqref{22} we have
\begin{align*}
F_{3k+1}
&=F_{2k+(k+1)}=F_{2k-1}F_{k+1}+F_{2k}F_{k+2}\\
&=\bj{F_{k-1}^2+F_k^2}F_{k+1}+F_k\bj{F_{k-1}+F_{k+1}}F_{k+2}\\
&=\bj{F_{k+1}-pF_k}^2F_{k+1}+F_k^2F_{k+1}+F_kF_{k+2}\bj{2F_{k+1}-pF_k}\\
&=F_{k+1}^3-2pF_kF_{k+1}^2+\bj{p^2+1}F_k^2F_{k+1}+F_k\bj{pF_{k+1}+F_k}\bj{2F_{k+1}-pF_k}\\
&=F_{k+1}^3-2pF_kF_{k+1}^2+\bj{p^2+1}F_k^2F_{k+1}+2pF_kF_{k+1}^2+\bj{2-p^2}F_k^2F_{k+1}-pF_k^3\\
&=F_{k+1}^3+3F_k^2F_{k+1}-pF_k^3.
\end{align*}
Next, using also \eqref{32} and \eqref{25}, we have
\begin{align*}
F_{3k+2}
&=pF_{3k+1}+F_{3k}\\
&=p\bj{F_{k+1}^3+3F_k^2F_{k+1}-pF_k^3}
+F_k\bj{F_{k-1}^2+F_k^2+F_{k-1}F_{k+1}+F_{k+1}^2}\\
&=p\bj{F_{k+1}^3+3F_k^2F_{k+1}-pF_k^3}
+F_k\bj{\bj{F_{k+1}-pF_k}^2+F_k^2+\bj{F_{k+1}-pF_k}F_{k+1}+F_{k+1}^2}\\
&=pF_{k+1}^3+3F_kF_{k+1}^2+F_k^3.\qedhere
\end{align*}
\end{proof}

\begin{proof}[Proof of the upper estimate of Theorem \ref{t12}]
By Lemma \ref{l31} it is sufficient to prove that the right hand side of the inequality \eqref{31} is positive.
This is equivalent to the relation
\begin{equation*}
R:=\frac{F_kF_{k+1}F_{k+2}}{F_{3k+1}}+\frac{F_kF_{k+1}F_{k+2}}{F_{3k+2}}+\frac{pF_k}{F_{k+3}}<p.
\end{equation*}
Using the relations \eqref{13}, \eqref{21}, \eqref{32} and \eqref{33} we may estimate $R$ as follows:
\begin{align*}
R
&=\frac{F_kF_{k+1}F_{k+2}}{F_{k+1}^3+3F_k^2F_{k+1}-pF_k^3}+\frac{F_kF_{k+1}F_{k+2}}{pF_{k+1}^3+3F_kF_{k+1}^2+F_k^3}+\frac{pF_k}{F_{k+3}}\\
&<\frac{F_kF_{k+1}F_{k+2}}{F_{k+1}^3+2F_k^2F_{k+1}}+\frac{F_kF_{k+1}F_{k+2}}{p^2F_kF_{k+1}^2+3F_kF_{k+1}^2}+\frac{pF_k}{p^3F_k}\\
&=\frac{F_kF_{k+2}}{F_{k+1}^2+2F_k^2}+\frac{F_{k+2}}{(p^2+3)F_{k+1}}+\frac{1}{p^2}\\
&=\frac{pF_kF_{k+1}+F_k^2}{F_{k+1}^2+2F_k^2}+\frac{F_{k+2}}{(p^2+3)F_{k+1}}+\frac{1}{p^2}\\
&\le\frac{F_{k+1}^2+F_k^2}{F_{k+1}^2+2F_k^2}+\frac{(p+1)F_{k+1}}{(p^2+3)F_{k+1}}+\frac{1}{p^2}\\
&<1+\frac{p+1}{p^2}+\frac{1}{p^2}\\
&=1+\frac{1}{p}+\frac{2}{p^2}.
\end{align*}
Since $p\ge 2$, the required estimate follows:
\begin{equation*}
R<1+\frac{1}{p}+\frac{2}{p^2}\le 1+\frac{1}{2}+\frac{2}{2^2}
=2\le p.\qedhere
\end{equation*}
\end{proof}

\section{Proof of the lower estimate of Theorem \ref{t13}}\label{s4}

We do not have an elegant generalization of Lemma \ref{l22}, but we use its underlying idea in Lemma \ref{l41} below.
We recall that the sequence $(F_i)$ satisfies Binet's formula
\begin{equation*}
F_i=\frac{A^i-B^i}{A-B},\quad i=0,1,\ldots,
\end{equation*}
where
\begin{equation*}
A:=\frac{p+\sqrt{p^2+4q}}{2}\qtq{and}B:=\frac{p-\sqrt{p^2+4q}}{2}=-\frac{q}{A},
\end{equation*}
are the solutions of the equation $x^2=px+q$.
Observe that
\begin{equation*}
p<A<p+1\qtq{and}-1<B<0.
\end{equation*}

\begin{lemma}\label{l41}
Set
\begin{equation*}
a_i:=\frac{A^2-1}{AF_i}-\frac{1}{F_{i-1}}+\frac{1}{F_{i+1}},\quad i=2,3,\ldots .
\end{equation*}
Then the series $\sum_{i=2}^{\infty}a_i$ is of Leibniz type, starting with $a_2>0$.
\end{lemma}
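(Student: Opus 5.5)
The plan is to compute $a_i$ in closed form from Binet's formula, bring it to the common denominator $F_{i-1}F_iF_{i+1}$, and read off the sign and size of the numerator. Set $D:=A-B$, so that $1/F_j=D/(A^j-B^j)$. Then
\begin{equation*}
a_i=\frac{N_i}{A\,D^2\,F_{i-1}F_iF_{i+1}},
\end{equation*}
where
\begin{equation*}
N_i:=(A^2-1)(A^{i-1}-B^{i-1})(A^{i+1}-B^{i+1})-A(A^i-B^i)(A^{i+1}-B^{i+1})+A(A^i-B^i)(A^{i-1}-B^{i-1}).
\end{equation*}
Expanding $N_i$ produces three groups of terms.
\begin{enumerate}[\upshape (i)]
\item The pure powers of $A$ give $(A^2-1)A^{2i}-A^{2i+2}+A^{2i}=0$. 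This is exactly why the factor $(A^2-1)/A$ was chosen.
\item The mixed terms, rewritten with $AB=-q$ and $A+B=p$, give
\begin{equation*}
(-q)^{i-1}C \qtq{with} C:=-(A^2-1)(A^2+B^2)-A(1+q)p.
\end{equation*}
Since $A>1$, the constant $C$ is strictly negative.
\item The pure powers of $B$ give the remainder $E_i:=B^{2i-1}\bigl[(A^2-1)B-AB^2+A\bigr]$, which is of size $O(|B|^{2i})$ with $|B|<1$.
\end{enumerate}
So $N_i=(-1)^i q^{i-1}|C|+E_i$.

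For the sign claims I will show $|E_i|<q^{i-1}|C|$ for all $i\ge 2$. Using $|B|<1\le q$, it suffices that $|(A^2-1)B-AB^2+A|\le |C|$, which is elementary: $|C|\ge A^2+B^2\cdot(A^2-1)+\ldots$ dominates each of $(A^2-1)|B|$, $A|B|^2$ and $A$. Then $\operatorname{sign}a_i=(-1)^i$, so the series alternates and $a_2>0$. Moreover $|a_i|\le \mathrm{const}\cdot q^{i-1}/(F_{i-1}F_iF_{i+1})\sim \mathrm{const}\,(q/A^3)^i\to0$, because $q\le p<A$.

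The main obstacle is the monotonicity $|a_{i+1}|<|a_i|$ for every $i\ge2$, not just asymptotically. Since $E_i$ has alternating sign, I would not try to absorb it crudely. Instead I would use the exact formula
\begin{equation*}
\frac{|a_{i+1}|}{|a_i|}=\frac{F_{i-1}}{F_{i+2}}\cdot\frac{q^{i}|C|\pm |E_{i+1}|}{q^{i-1}|C|\pm|E_i|}.
\end{equation*}
The first factor satisfies $F_{i+2}\ge p^2F_i+qF_{i+1}$, which gives $F_{i-1}/F_{i+2}\le 1/(p^3)$ roughly, hence $qF_{i-1}/F_{i+2}\le q/p^3\le 1/p^2\le1/4$. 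The second factor is at most
\begin{equation*}
q\cdot\frac{1+|B|^2/(q\,\cdot)}{1-|E_i|/(q^{i-1}|C|)}
\end{equation*}
in the appropriate normalization. So it suffices to show $|E_i|\le\frac12 q^{i-1}|C|$, which gives a ratio at most $\frac14\cdot 3<1$. If these constants are too tight at $i=2$, I would check $i=2,3$ directly from $F_1=1$, $F_2=p$, $F_3=p^2+q$, $F_4=p^3+2pq$, and use the asymptotic bound for $i\ge4$, where $|B|^{2i}$ is already negligible.
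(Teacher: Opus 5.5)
Your argument is correct in substance and takes a different route from the paper. I checked your expansion: the pure $A$-powers cancel, the mixed terms give $(-q)^{i-1}C$ with $C<0$, and the pure $B$-powers give $E_i$. The paper never expands $F_{i\pm1}$ in Binet powers. Instead, from $F_i-AF_{i-1}=B^{i-1}$ and a polynomial identity that reduces to $A^2=pA+q$, it obtains the exact factorization
\[
a_i=-\frac{B^{i-1}H_i}{A^2F_{i-1}F_iF_{i+1}},\qquad H_i=q(A^2-1)F_{i-1}+pA^2F_i>0 .
\]
In your notation this says $N_i=-(A-B)^2A^{-1}B^{i-1}H_i$. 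So $E_i$ is not really an error term: it recombines exactly with the mixed terms once $F_{i-1},F_i$ are left unexpanded. This gives the paper the sign $(-1)^i$ at once. Monotonicity then reduces to the positive-term inequality $H_iF_{i+2}>H_{i+1}F_{i-1}$, which yields $\abs{a_{i+1}/a_i}<\abs{B}<1$ for all $p\ge q\ge 1$ with no numerical slack. Your perturbative approach is more mechanical and nicely explains why the factor $(A^2-1)/A$ appears. However, your final bound $\abs{a_{i+1}/a_i}\le 3q/p^3\le 3/p^2$ needs $p\ge 2$ as written. That is harmless here, since Section~\ref{s4} assumes $p\ge q\ge 2$, but you should say so.

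Two steps need repair.

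First, the inequality $F_{i+2}\ge p^2F_i+qF_{i+1}$ is false: for $p=q=2$, $i=2$ it reads $16\ge 20$. The bound you actually use, $F_{i-1}/F_{i+2}\le p^{-3}$, follows from $F_{j+1}\ge pF_j$ applied three times.

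Second, you should prove $\delta_i:=\abs{E_i}/(q^{i-1}\abs{C})\le\frac12$ rather than hedge about checking $i=2,3$ separately. It holds for every $i\ge 2$ when $p\ge 2$, because
\[
\abs{B}^{2i-1}<1\le q^{i-1},\qquad (A^2-1)\abs{B}<\tfrac12(A^2-1)A^2,\qquad A+AB^2<2A\le\tfrac12Ap(1+q).
\]
Since $\abs{E_{i+1}}=B^2\abs{E_i}\le \delta_i q^{i}\abs{C}$, this gives
\[
\frac{\abs{N_{i+1}}}{\abs{N_i}}\le q\,\frac{1+\delta_i}{1-\delta_i}\le 3q ,
\]
which is the clean form of your garbled middle estimate. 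Combined with the first repair, it yields $\abs{a_{i+1}/a_i}\le 3/p^2\le \frac34$.
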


\begin{proof}
Since $F_i\to+\infty$, $a_i\to 0$.
Next we show that
\begin{equation}\label{41}
a_i=-\frac{B^{i-1}H_i} {A^2F_{i-1}F_iF_{i+1}}, \qquad H_i=q(A^2-1)F_{i-1}+pA^2F_i>0. 
\end{equation}
First we take a common denominator in the definition of $a_i$ to get
\begin{equation*}
a_i=\frac{(A^3-A)F_{i-1}F_{i+1}-A^2F_iF_{i+1}+A^2F_{i-1}F_i}{A^2F_{i-1}F_iF_{i+1}}.
\end{equation*}
Hence \eqref{41} will follow if we prove the identities
\begin{align*}
&F_i-AF_{i-1}=B^{i-1}
\intertext{and}
&(A^3-A)F_{i-1}F_{i+1}-A^2F_iF_{i+1}+A^2F_{i-1}F_i
=(AF_{i-1}-F_i)[q\bj{A^2-1}F_{i-1}+pA^2F_i].
\end{align*}
for every $i\ge 1$.
For $i=1,2$ the first identity reduces to the obvious equalities $1=1$ and $p-A=B$; the general case hence follows by induction because $F_{i+1}=pF_i+qF_{i-1}$ and $B^2=pB+q$.

Next, eliminating $F_{i+1}$ from the second identity by using the relation $F_{i+1}=pF_i+qF_{i-1}$, it remains to show that
\begin{equation*}
[(A^3-A)F_{i-1}-A^2F_i]\bj{pF_i+qF_{i-1}}+A^2F_{i-1}F_i
=(AF_{i-1}-F_i)[q\bj{A^2-1}F_{i-1}+pA^2F_i].
\end{equation*}
Developing both sides, after simplification this reduces to the obvious relation $A^2=pA+q$.

It follows from \eqref{41} that the sequence $(a_i)$ has alternating sign.

It remains to show the inequalities $|a_i|>|a_{i+1}|$.
For this we observe that
\begin{align*}
H_iF_{i+2}&-H_{i+1}F_{i-1}\\
&=[q\bj{A^2-1}F_{i-1}+pA^2F_i]\bj{pF_{i+1}+qF_i}-[q\bj{A^2-1}F_i+pA^2F_{i+1}]F_{i-1}\\
&= q(A^2-1) F_{i-1} (F_{i+2}-F_i)
  + p A^2 [(pF_i-F_{i-1})F_{i+1} + qF_i^2]>0
\end{align*}
because $A>p\ge q\ge 1$, $F_{i+2}>F_i$, $F_{i-1}>0$, $pF_i-F_{i-1}\ge 0$, and $qF_i^2>0$.
The monotonicity hence follows:
\begin{equation*}
\abs{\frac{a_{i+1}}{a_i}}
=\frac{|B|^iH_{i+1}}{A^2F_iF_{i+1}F_{i+2}}
\cdot\frac{A^2F_{i-1}F_iF_{i+1}}{|B|^{i-1}H_i}
=\frac{H_{i+1}F_{i-1}}{H_iF_{i+2}}|B|<|B|<1.\qedhere
\end{equation*}
\end{proof}

\begin{lemma}
Let $k\ge 1$ be an odd integer.
If 
\begin{equation}\label{42}
-\frac{1}{F_k}+\bj{p+\frac{q-1}{A+1}}\frac{1}{F_{k+1}}+\frac{A+q}{A+1}\frac{1}{F_{k+2}}
\le\frac{A+q}{A+1}\bj{\frac{1}{F_K}+\frac{1}{F_{K+1}}}
\end{equation}
for $K:=K_1(k)$, then the first estimate of Theorem \ref{t13} holds.
\end{lemma}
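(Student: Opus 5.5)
The plan is to reduce the left inequality in \eqref{15} to \eqref{42}, using the telescoping identity hidden in Lemma \ref{l41}. The key observation is that the coefficient $p+q-1$ is a constant multiple of $(A^2-1)/A$. Set
\begin{equation*}
c:=\frac{A+q}{A+1}.
\end{equation*}
Using $A^2=pA+q$ we get
\begin{equation*}
c\cdot\frac{A^2-1}{A}=\frac{(A+q)(A-1)}{A}=\frac{A^2+(q-1)A-q}{A}=p+q-1 .
\end{equation*}
By the definition of $a_i$ in Lemma \ref{l41}, this gives for every $i\ge 2$
\begin{equation*}
\frac{p+q-1}{F_i}=c\bj{\frac{1}{F_{i-1}}-\frac{1}{F_{i+1}}+a_i}.
\end{equation*}

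Write $K:=K_1(k)$; by Remarks \ref{r16} (ii) we have $K\ge k+1$. First assume $K\ge k+2$. Summing the identity above for $i=k+2,\ldots,K$, the telescoping part collapses and we get
\begin{equation*}
\sum_{i=k+2}^{K}\frac{p+q-1}{F_i}=c\bj{\frac{1}{F_{k+1}}+\frac{1}{F_{k+2}}-\frac{1}{F_K}-\frac{1}{F_{K+1}}}+c\sum_{i=k+2}^{K}a_i .
\end{equation*}
Next we determine the sign of the remaining finite sum. By \eqref{41}, the sign of $a_i$ is that of $-B^{i-1}$, and $B<0$. Since $k$ is odd, $i=k+2$ is odd, so $B^{i-1}>0$ and $a_{k+2}<0$. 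By Lemma \ref{l41}, the terms alternate in sign and strictly decrease in absolute value. Hence every finite partial sum $\sum_{i=k+2}^{K}a_i$ has the sign of its first term, and is therefore strictly negative.

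Adding $(p-1)/F_{k+1}-1/F_k$ to both sides, and noting that
\begin{equation*}
p-1+c=p+\frac{q-1}{A+1},
\end{equation*}
we obtain
\begin{equation*}
\frac{p-1}{F_{k+1}}+\sum_{i=k+2}^{K}\frac{p+q-1}{F_i}-\frac{1}{F_k}
<-\frac{1}{F_k}+\bj{p+\frac{q-1}{A+1}}\frac{1}{F_{k+1}}+\frac{c}{F_{k+2}}-c\bj{\frac{1}{F_K}+\frac{1}{F_{K+1}}}.
\end{equation*}
By hypothesis \eqref{42}, the right-hand side is $\le 0$, which is exactly the first estimate of Theorem \ref{t13}.

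The only delicate point is the degenerate case $K=k+1$. Here the sum over $i$ is empty, so the strict inequality coming from the $a_i$ is unavailable. In this case I would argue directly: the claim becomes $(p-1)/F_{k+1}<1/F_k$. This holds because $F_{k+1}=pF_k+qF_{k-1}\ge pF_k>(p-1)F_k$. So \eqref{42} is not even needed in this case.

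Apart from that, the main step is recognizing the factorization $p+q-1=c(A^2-1)/A$ and matching the resulting boundary terms with \eqref{42}. The sign bookkeeping (oddness of $k$ forcing $a_{k+2}<0$) must be checked carefully.
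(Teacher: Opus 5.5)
Your proof is correct and follows the paper's argument. The paper writes $\frac{p-1}{F_{k+1}}=-\frac{q}{F_{k+1}}+\frac{p+q-1}{F_{k+1}}$, telescopes from $i=k+1$, and bounds $\sum_{i=k+1}^{K}a_i<a_{k+1}$; this is equivalent to your bound $\sum_{i=k+2}^{K}a_i<0$, your choice of starting index merely saves re-expanding $a_{k+1}$, and the paper handles the case $K=k+1$ exactly as you do.
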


\begin{remark*}
The relation \eqref{42} is equivalent to
\begin{equation*}
-\frac{A+1}{F_k}+\frac{A^2+p-1}{F_{k+1}}+\frac{A+q}{F_{k+2}}\le\frac{A+q}{F_K}+\frac{A+q}{F_{K+1}}.
\end{equation*}
\end{remark*}

\begin{proof}
We recall from Remark \ref{r16} (ii) that $K_1(k)\ge k+1$.
For $K=k+1$ the inequality \eqref{42} reduces to $-\frac{1}{F_k}+\frac{p-1}{F_{k+1}}\le 0$, which is satisfied because $F_{k+1}=pF_k+qF_{k-1}>(p-1)F_k$.

Henceforth we assume that $K\ge k+2$.
Applying Lemma \ref{l41} we have the following relations:
\begin{equation}\label{43}
\begin{split}
\frac{p-1}{F_{k+1}}&+\bj{\sum_{i=k+2}^{K}\frac{p+q-1}{F_i}}-\frac{1}{F_k}\\
&=-\frac{1}{F_k}-\frac{q}{F_{k+1}}+\bj{\sum_{i=k+1}^{K}\frac{p+q-1}{F_i}}\\
&=-\frac{1}{F_k}-\frac{q}{F_{k+1}}+\frac{(p+q-1)A}{A^2-1}\sum_{i=k+1}^{K}\bj{\frac{1}{F_{i-1}}-\frac{1}{F_{i+1}}+a_i}\\
&=-\frac{1}{F_k}-\frac{q}{F_{k+1}}
+\frac{(p+q-1)A}{A^2-1}\bj{\frac{1}{F_k}+\frac{1}{F_{k+1}}-\frac{1}{F_K}-\frac{1}{F_{K+1}}+\sum_{i=k+1}^{K}a_i}
.
\end{split}
\end{equation}
If $k$ is odd, then the last Leibniz type alternating sum starts with a positive term, so that
\begin{align*}
\frac{p-1}{F_{k+1}}&+\bj{\sum_{i=k+2}^{K}\frac{p+q-1}{F_i}}-\frac{1}{F_k}\\
&<-\frac{1}{F_k}-\frac{q}{F_{k+1}}
+\frac{(p+q-1)A}{A^2-1}\bj{\frac{1}{F_k}+\frac{1}{F_{k+1}}-\frac{1}{F_K}-\frac{1}{F_{K+1}}+a_{k+1}}\\
&=-\frac{1}{F_k}-\frac{q}{F_{k+1}}
+\frac{(p+q-1)A}{A^2-1}\bj{\frac{1}{F_{k+1}}+\frac{1}{F_{k+2}}+\frac{A^2-1}{AF_{k+1}}-\frac{1}{F_K}-\frac{1}{F_{K+1}}}\\
&=-\frac{1}{F_k}+\bj{\frac{(p+q-1)A}{A^2-1}+p-1}\frac{1}{F_{k+1}}+\frac{(p+q-1)A}{A^2-1}\bj{\frac{1}{F_{k+2}}-\frac{1}{F_K}-\frac{1}{F_{K+1}}}.
\end{align*}
Since
\begin{equation*}
\frac{(p+q-1)A}{A^2-1}
=\frac{(A^2-q)+(q-1)A}{A^2-1}
=\frac{(A^2-1)+(q-1)(A-1)}{A^2-1}
=1+\frac{q-1}{A+1}
=\frac{A+q}{A+1},
\end{equation*}
we conclude that
\begin{multline*}
\frac{p-1}{F_{k+1}}+\bj{\sum_{i=k+2}^{K}\frac{p+q-1}{F_i}}-\frac{1}{F_k}\\
<-\frac{1}{F_k}+\bj{p+\frac{q-1}{A+1}}\frac{1}{F_{k+1}}+\frac{A+q}{A+1}\cdot\bj{\frac{1}{F_{k+2}}-\frac{1}{F_K}-\frac{1}{F_{K+1}}}.
\end{multline*}
We conclude by observing that the last expression is $\le 0$ by \eqref{42}.
\end{proof}

Let us denote the left and right hand side of  \eqref{42} by $L_k$ and $R_K$, respectively.
We are going to show that \eqref{42} is satisfied with $K=K_1(k)$ defined in Theorem \ref{t13}.
It is convenient to introduce the number
\begin{equation*}
t:=-\frac{B}{A}=\frac{q}{A^2}\in (0,1).
\end{equation*}
Then
\begin{multline}\label{44}
\frac{1}{F_j}=\frac{A-B}{A^j-B^j}
=\frac{A(1+t)}{A^j(1-(-t)^j)}\\
=A^{1-j}(1+t)\bj{1+(-t)^j+(-t)^{2j}+(-t)^{3j}+(-t)^{4j}+\cdots},\quad j=1,2,\ldots .
\end{multline}

Next we  estimate the two sides of the inequality \eqref{42}.

\begin{lemma}\label{l43}
The left hand side $L_k$ of  \eqref{42} satisfies the following inequalities:
\begin{equation}\label{45}
\frac{L_k}{1+t}<A^{1-k}\bj{-1+t^k}+\bj{p+\frac{q-1}{A+1}}\frac{A^{-k}}{1-t^{k+1}}+\bj{1+\frac{q-1}{A+1}}A^{-k-1}.
\end{equation}
\end{lemma}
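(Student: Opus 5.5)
We bound each of the three terms of $L_k$ separately, using the expansion \eqref{44} of $1/F_j$ and the parity of $k$. Since $k$ is odd, $k+1$ is even and $k+2$ is odd. Dividing $L_k$ by $1+t$ gives
\begin{equation*}
\frac{L_k}{1+t}=-\frac{1}{(1+t)F_k}+\bj{p+\frac{q-1}{A+1}}\frac{1}{(1+t)F_{k+1}}+\frac{A+q}{A+1}\cdot\frac{1}{(1+t)F_{k+2}},
\end{equation*}
and by \eqref{44} we have $\frac{1}{(1+t)F_j}=\frac{A^{1-j}}{1-(-t)^j}$.

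For the first term, $j=k$ is odd, so $\frac{1}{(1+t)F_k}=\frac{A^{1-k}}{1+t^k}$. Since $\frac{1}{1+x}>1-x$ for $x=t^k\in(0,1)$, we get
\begin{equation*}
-\frac{1}{(1+t)F_k}<A^{1-k}\bj{-1+t^k}.
\end{equation*}
This step gives the strict inequality.

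For the second term, $j=k+1$ is even, so $\frac{1}{(1+t)F_{k+1}}=\frac{A^{-k}}{1-t^{k+1}}$ exactly. Its coefficient $p+\frac{q-1}{A+1}$ is positive, so this term appears in \eqref{45} without any loss.

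For the third term, $j=k+2$ is odd, so $\frac{1}{(1+t)F_{k+2}}=\frac{A^{-k-1}}{1+t^{k+2}}<A^{-k-1}$. We also need the coefficient identity $\frac{A+q}{A+1}=1+\frac{q-1}{A+1}$, which is immediate. Since this coefficient is positive, the term is bounded by $\bj{1+\frac{q-1}{A+1}}A^{-k-1}$.

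Adding the three bounds yields \eqref{45}.

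There is no real obstacle here. The only points needing care are the parity bookkeeping, which fixes the sign of $(-t)^j$ and hence whether the denominator is $1+t^j$ or $1-t^j$, and the check that the coefficients $p+\frac{q-1}{A+1}$ and $\frac{A+q}{A+1}$ are positive, which holds because $q\ge1$ and $A>0$.
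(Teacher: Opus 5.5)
Your proposal is correct and is essentially the paper's argument. The paper expands each $1/F_j$ via \eqref{44} into a geometric series, sums the even-index one exactly and truncates the two alternating ones by the Leibniz bound. You use the closed form $\frac{1}{(1+t)F_j}=\frac{A^{1-j}}{1-(-t)^j}$ with $\frac{1}{1+x}>1-x$ and $\frac{1}{1+x}<1$, which are the same truncation estimates written in closed form.
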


\begin{remark*}
The inequality \eqref{45} is equivalent to
\begin{equation*}
\frac{L_k}{1+t}<A^{1-k}\bj{-1+t^k}+\frac{A^2+p-1}{A+1}\cdot\frac{A^{-k}}{1-t^{k+1}}+\frac{A+q}{A+1}\cdot A^{-k-1}.
\end{equation*}
\end{remark*}

\begin{proof}
Using \eqref{44} we may rewrite the definition of $L_k$ in the following form (we recall that $k$ is odd):
\begin{align*}
\frac{L_k}{1+t}=A^{1-k}&\bj{-1+t^k-t^{2k}+t^{3k}-t^{4k}+\cdots}\\
&+\bj{p+\frac{q-1}{A+1}}A^{-k}\bj{1+t^{k+1}+t^{2k+2}+t^{3k+3}+t^{4k+4}+\cdots }\\
&+\bj{1+\frac{q-1}{A+1}}A^{-k-1}\bj{1-t^{k+2}+t^{2k+4}-t^{3k+6}+t^{4k+8}+\cdots}.
\end{align*}
Since $0<t<1$, by the classical majoration of Leibniz type alternating sums we have
\begin{align*}
&-1+t^k-t^{2k}+t^{3k}-t^{4k}+\cdots<-1+t^k
\intertext{and}
&1-t^{k+2}+t^{2k+4}-t^{3k+6}+t^{4k+8}+\cdots <1.
\intertext{Finally,}
&1+t^{k+1}+t^{2k+2}+t^{3k+3}+t^{4k+4}+\cdots 
=\frac{1}{1-t^{k+1}}.\qedhere
\end{align*}
\end{proof}

\begin{lemma}\label{l44}
Setting
\begin{equation*}
T_K=\frac{A}{1-(-t)^K}+\frac{1}{1-(-t)^{K+1}},
\end{equation*}
the following estimates hold for all $K\ge 2$:
\begin{equation*}
A+1-\frac1{A^2} <T_K <A+1+A^{1-K} <A+2.
\end{equation*}
\end{lemma}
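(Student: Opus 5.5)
The plan is a case distinction on the parity of $K$, using the elementary bounds $1-x<\frac{1}{1+x}$ and $\frac{1}{1-x}=1+\frac{x}{1-x}$ for $0<x<1$. The key smallness fact is
\begin{equation*}
tA=\frac{q}{A}<1,\qquad\text{equivalently}\qquad t<\frac1A .
\end{equation*}
This holds because $q\le p<A$.

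Write $s:=t^K$ and $u:=t^{K+1}$. Exactly one of $K$, $K+1$ is even, so:
\begin{align*}
&K\text{ even:}\quad T_K=\frac{A}{1-s}+\frac{1}{1+u},\\
&K\text{ odd:}\quad T_K=\frac{A}{1+s}+\frac{1}{1-u}.
\end{align*}

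\emph{Lower estimate.}
\begin{itemize}
\item If $K$ is even, then $\frac{A}{1-s}>A$ and $\frac{1}{1+u}>1-u$. Since $K+1\ge3$, we get $u\le t^3<A^{-3}<A^{-2}$, so $T_K>A+1-A^{-2}$.
\item If $K$ is odd, then $K\ge3$. We use $\frac{A}{1+s}>A-As$ and $\frac{1}{1-u}>1$. Since $As\le At^3<A\cdot A^{-3}=A^{-2}$, again $T_K>A+1-A^{-2}$.
\end{itemize}
This part is routine.

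\emph{Upper estimate $T_K<A+1+A^{1-K}$.}
\begin{itemize}
\item If $K$ is even, then
\begin{equation*}
T_K<A+\frac{As}{1-s}+1 .
\end{equation*}
So it suffices to show $\frac{As}{1-s}\le A^{1-K}$, i.e. $A^Ks\le 1-s$, i.e.
\begin{equation*}
(tA)^K+t^K\le1 .
\end{equation*}
\item If $K$ is odd, then $T_K<A+1+\frac{u}{1-u}$, and since $u<s$ the same inequality suffices.
\end{itemize}

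The main point is therefore the inequality $(q/A)^K+(q/A^2)^K<1$ for all $K\ge2$. Both bases are less than $1$, so it is enough to treat $K=2$, i.e. to show
\begin{equation*}
q^2(A^2+1)<A^4 .
\end{equation*}
For this, use $A^2=pA+q\ge q(A+1)$, which gives
\begin{equation*}
A^4\ge q^2(A+1)^2>q^2(A^2+1).
\end{equation*}
This is the only non-mechanical step; I expect it to be the main obstacle.

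\emph{Final inequality.} Since $K\ge2$ and $A>1$, we have $A^{1-K}<1$, hence $A+1+A^{1-K}<A+2$.
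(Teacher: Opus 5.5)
Your proof is correct and follows essentially the same route as the paper: a parity split on $K$, the bound $At^3<A^{-2}$ for the lower estimate, and reduction of the upper estimate to $(tA)^K+t^K\le 1$, which is exactly the paper's inequality $|B|^K+q^K\le A^K$. The differences are cosmetic. The paper proves that inequality by induction from $K=1$ (using $A-|B|=p\ge q$) rather than by your monotonicity argument plus the $K=2$ check, and for even $K$ it obtains the stronger lower bound $T_K>A+1$.
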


\begin{proof}
If $K$ is even, then
\begin{equation*}
T_K-(A+1)=\frac{A}{1-t^K}+\frac{1}{1+t^{K+1}}-(A+1)
=\frac{At^K}{1-t^K}-\frac{t^{K+1}}{1+t^{K+1}}
>At^K-t^{K+1}>0
\end{equation*}
because $0<t<1<A$, and
\begin{equation*}
T_K-(A+1)=\frac{A}{1-t^K}+\frac{1}{1+t^{K+1}}-(A+1)
=\frac{At^K}{1-t^K}-\frac{t^{K+1}}{1+t^{K+1}}
<\frac{At^K}{1-t^K}
\le A^{1-K}<1.
\end{equation*}
Only the second to last inequality is not obvious.
Since
\begin{equation*}
\frac{At^K}{1-t^K}
\le A^{1-K}
\Longleftrightarrow (1+A^K)t^K\le 1
\Longleftrightarrow (1+A^K)|B|^K\le A^K
\Longleftrightarrow |B|^K+q^K\le A^K,
\end{equation*}
it is sufficient to show the last inequality.
It is satisfied for $K=1$ because $A-|B|=p\ge q$, and the case $K>1$ hence follows by induction because $|B|<q<A$.

If $K$ is odd, then $K\ge 3$.
Since $0<t<1$ and $At=q/A<1$, now we have
\begin{align*}
T_K-(A+1)
&=\frac{A}{1+t^K}+\frac{1}{1-t^{K+1}}-(A+1)\\
&=\frac{t^{K+1}}{1-t^{K+1}}-\frac{At^K}{1+t^K}
>-\frac{At^K}{1+t^K}
>-At^K
\ge -At^3
>-\frac{1}{A^2},
\end{align*}
and
\begin{equation*}
T_K-(A+1)=\frac{t^{K+1}}{1-t^{K+1}}-\frac{At^K}{1+t^K}<\frac{t^{K+1}}{1-t^{K+1}}\le A^{-K-1}<A^{1-K}.
\end{equation*}
Indeed, in the last case we may repeat the computation given for even $K$'s with $K+1$ instead of $K$.
\end{proof}

\begin{lemma}\label{l45}
The right hand side $R_K$ of  \eqref{42} satisfies the following inequality for all $K\ge 2$:
\begin{equation*}
\frac{R_K}{1+t}>\frac{A+q}{A^K}\bj{1-\frac{1}{A^2(A+1)}}.
\end{equation*}
\end{lemma}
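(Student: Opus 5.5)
Using \eqref{44} we first rewrite the two terms of $R_K$ as
\begin{equation*}
\frac{1}{F_K}+\frac{1}{F_{K+1}}
=(1+t)\bj{\frac{A^{1-K}}{1-(-t)^K}+\frac{A^{-K}}{1-(-t)^{K+1}}}
=(1+t)A^{-K}T_K,
\end{equation*}
with $T_K$ as in Lemma \ref{l44}. This is simply the closed geometric-series form of \eqref{44}, namely $1/F_j=A^{1-j}(1+t)/(1-(-t)^j)$, applied with $j=K$ and $j=K+1$ and then with the factor $A^{-K}$ pulled out. Consequently
\begin{equation*}
\frac{R_K}{1+t}=\frac{A+q}{A+1}\,A^{-K}\,T_K .
\end{equation*}
This step is exact and no estimates enter.

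Next we apply the lower bound $T_K>A+1-A^{-2}$ of Lemma \ref{l44}, which is valid for all $K\ge 2$. Since $A+q>0$, this gives
\begin{equation*}
\frac{R_K}{1+t}>\frac{A+q}{A+1}A^{-K}\bj{A+1-\frac{1}{A^2}}
=\frac{A+q}{A^K}\bj{1-\frac{1}{A^2(A+1)}},
\end{equation*}
which is precisely the asserted inequality.

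There is no real obstacle in this argument. The only point that needs checking is the bookkeeping of exponents in \eqref{44}: $1/F_K$ carries the factor $A^{1-K}=A\cdot A^{-K}$, which accounts for the coefficient $A$ in the first term of $T_K$, while $1/F_{K+1}$ carries $A^{-K}$, which accounts for the coefficient $1$ in the second term. With these factors matched, the identification with $T_K$ is exact and Lemma \ref{l44} applies directly.
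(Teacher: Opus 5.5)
Your proof is correct and is essentially the paper's own argument. The paper likewise uses \eqref{44} to write $\frac{R_K}{1+t}=\frac{A+q}{A^K(A+1)}T_K$ and then applies the lower bound $T_K>A+1-A^{-2}$ from Lemma \ref{l44}; you have simply written out the exponent bookkeeping and the final simplification explicitly.
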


\begin{proof}
The lemma follows from Lemma \ref{l44} because
\begin{equation*}
\frac{R_K}{1+t}=\frac{A+q}{A^K(A+1)}T_K
\end{equation*}
by \eqref{44}.
\end{proof}

Now we are ready to prove the first inequality of Theorem \ref{t13}.
In view of the estimates of $L_k$ and $R_K$ in Lemmas \ref{l43} and \ref{l45}, \eqref{42} will follow if we show  that
\begin{equation*}
A^{1-k}\bj{-1+t^k}+\bj{p+\frac{q-1}{A+1}}\frac{A^{-k}}{1-t^{k+1}}+\bj{1+\frac{q-1}{A+1}}A^{-k-1}
<\frac{A+q}{A^K}\bj{1-\frac{1}{A^2(A+1)}},
\end{equation*}
or equivalently that
\begin{equation*}
A^2\bj{-1+t^k}+\bj{p+\frac{q-1}{A+1}}\frac{A}{1-t^{k+1}}+\bj{1+\frac{q-1}{A+1}}
<\frac{(A+q)A^{k+1}}{A^K}\bj{1-\frac{1}{A^2(A+1)}}.
\end{equation*}
Since
\begin{equation*}
-A^2+\bj{p+\frac{q-1}{A+1}}A+\bj{1+\frac{q-1}{A+1}}
=-(pA+q)+pA+1+\frac{q-1}{A+1}(A+1)=0,
\end{equation*}
the relation may be rewritten in the form
\begin{equation*}
A^2t^k+\bj{p+\frac{q-1}{A+1}}\frac{At^{k+1}}{1-t^{k+1}}<\frac{(A+q)A^{k+1}}{A^K}\bj{1-\frac{1}{A^3+A^2}}.
\end{equation*}
Since $k\ge 1$ and $0<t<1$, it is sufficient to prove  the following, slightly stronger estimate:
\begin{equation*}
A^2t^k+\bj{p+\frac{q-1}{A+1}}\frac{At^{k+1}}{1-t^2}<\frac{(A+q)A^{k+1}}{A^K}\bj{1-\frac{1}{A^3+A^2}}.
\end{equation*}
Using the definition $t=qA^{-2}$ we eliminate $t$, and the last inequality takes the following form:
\begin{equation*}
q^kA^{2-2k}+\bj{p+\frac{q-1}{A+1}}\frac{q^{k+1}A^{-2k-1}}{1-q^2A^{-4}}<\frac{(A+q)A^{k+1}}{A^K}\bj{1-\frac{1}{A^3+A^2}}.
\end{equation*}
This is equivalent to
\begin{equation*}
q^kA^{K-3k+1}\left\lbrace 1+\bj{p+\frac{q-1}{A+1}}\frac{qA^{-3}}{1-q^2A^{-4}}\right\rbrace
<(A+q)\bj{1-\frac{1}{A^3+A^2}}
\end{equation*}
or to
\begin{equation*}
q^kA^{K-3k+1}\left\lbrace 1+\bj{p+\frac{q-1}{A+1}}\frac{qA}{A^4-q^2}\right\rbrace
<(A+q)\bj{1-\frac{1}{A^3+A^2}}.
\end{equation*}
Multiplying by $(A^3+A^2)(A^4-q^2)$ it takes the following form:
\begin{equation*}
q^kA^{K-3k+1}\left\lbrace (A^3+A^2)(A^4-q^2)+(pA+p+q-1)qA^3\right\rbrace
<(A+q)(A^4-q^2)\bj{A^3+A^2-1}.
\end{equation*}
By Remark \ref{r14} the lower estimate of Theorem \ref{t13} will be proved if we establish the following inequality:
\begin{equation}\label{46}
(p+q-1)\left\lbrace (A^3+A^2)(A^4-q^2)+(pA+p+q-1)qA^3\right\rbrace
<(A+q)(A^4-q^2)\bj{A^3+A^2-1}.
\end{equation}

The inequality \eqref{46} may be simplified to
\begin{multline*}
(p+q-1)\left\lbrace A^7+A^6+pqA^4+(pq-q)A^3-q^2A^2\right\rbrace\\ <A^8+(q+1)A^7+qA^6-A^5-(q^2+q)A^4-(q^3+q^2)A^3-q^3A^2+q^2A+q^3.
\end{multline*}
We have to show that the difference between the right and left hand sides of this relation is positive, i.e., that
\begin{multline*}
\Delta:=A^8+(2-p)A^7+(1-p)A^6-A^5+(-q^2-q-p^2q-pq^2+pq)A^4\\
+(-q^3-p^2q-pq^2+2pq-q)A^3+(pq^2-q^2)A^2+q^2A+q^3>0.
\end{multline*}
Let us transform the last expression by applying repeatedly the relation $A^2=pA+q$ (for more visibility the terms denoted by ``$\ldots$'' are unchanged):
%
\begin{align*}
\Delta
&=2A^7+(q+1-p)A^6-A^5+(-q^2-q-p^2q-pq^2+pq)A^4\\
&\qquad\qquad +(-q^3-p^2q-pq^2+2pq-q)A^3+(pq^2-q^2)A^2+q^2A+q^3\\
&=(q+1+p)A^6+(2q-1)A^5+(-q^2-q-p^2q-pq^2+pq)A^4\\
&\qquad\qquad +(-q^3-p^2q-pq^2+2pq-q)A^3+(pq^2-q^2)A^2+q^2A+q^3\\
&=(pq+p+p^2+2q-1)A^5+(2pq-p^2q-pq^2)A^4+(-q^3-p^2q-pq^2+2pq-q)A^3\\
&\qquad\qquad +(pq^2-q^2)A^2+q^2A+q^3\\
&=(p^2+p^3+4pq-pq^2-p)A^4+(-q^3+3pq+2q^2-2q)A^3+(pq^2-q^2)A^2+q^2A+q^3\\
&=(p^2+p^3-pq^2-p)A^4+(4p^2q-q^3+3pq+2q^2-2q)A^3+(5pq^2-q^2)A^2+q^2A+q^3\\
&=p\bigl((p^2-q^2)+(p-1)\bigr)A^4+q\bigl(4p^2-q^2+3p+2q-2\bigr)A^3+q^2(5p-1)A^2+q^2A+q^3.
\end{align*}
Since $p\ge q\ge 1$ and $A>0$, the last expression is positive.
Hence $\Delta>0$, and the proof is completed.

\section{Proof of the upper estimate of Theorem \ref{t13}}\label{s5}

We recall from Remark \ref{r16} (ii) that $K_2(k)\ge k+2$.
Setting $K:=K_2(k)$ for brevity, we have to prove for every positive odd integer $k$ the inequality
\begin{equation}\label{51}
\frac{p-1}{F_{k+1}}+\bj{\sum_{i=k+2}^{K}\frac{p+q-1}{F_i}}-\frac{1}{F_k}>0.
\end{equation}

\begin{lemma}\label{l51}
Let $k\ge 1$ be an odd integer.
If 
\begin{equation}\label{52}
-\frac{A+1}{F_k}
+\frac{(p-1)(A+1)}{F_{k+1}}
+\frac{(p+q)A+p+2q-1}{F_{k+2}}
+\frac{A+q}{F_{k+3}}
>\frac{A+q}{F_K}+\frac{A+q}{F_{K+1}},
\end{equation}
then the inequality \eqref{51} holds.
\end{lemma}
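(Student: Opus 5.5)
The plan is to run the argument of the lower-estimate lemma in reverse. We start from the identity \eqref{43}, which is valid for any $K\ge k+2$, and now take $K=K_2(k)$; by Remark \ref{r16} (ii) indeed $K_2(k)\ge k+2$. This gives
\begin{equation*}
E:=\frac{p-1}{F_{k+1}}+\sum_{i=k+2}^{K}\frac{p+q-1}{F_i}-\frac{1}{F_k}
=-\frac{1}{F_k}-\frac{q}{F_{k+1}}+c\bj{\frac{1}{F_k}+\frac{1}{F_{k+1}}-\frac{1}{F_K}-\frac{1}{F_{K+1}}+\sum_{i=k+1}^{K}a_i},
\end{equation*}
where
\begin{equation*}
c=\frac{(p+q-1)A}{A^2-1}=\frac{A+q}{A+1}.
\end{equation*}

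Since $k$ is odd, Lemma \ref{l41} says that $\sum_{i=k+1}^{K}a_i$ is a Leibniz-type alternating sum starting with the positive term $a_{k+1}$. Because it has at least two terms, it is strictly minorized by $a_{k+1}+a_{k+2}$. This is the same step as in Lemma \ref{l31}: when a finite alternating sum with decreasing moduli has three or more terms, the tail after the first two terms is positive; when it has exactly two terms, the inequality is an equality. In that equality case I would keep the non-strict inequality and carry the strictness from \eqref{52} instead. Hence
\begin{equation*}
E\ge -\frac{1}{F_k}-\frac{q}{F_{k+1}}+c\bj{\frac{1}{F_k}+\frac{1}{F_{k+1}}+a_{k+1}+a_{k+2}-\frac{1}{F_K}-\frac{1}{F_{K+1}}}.
\end{equation*}

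Next we insert the definitions of $a_{k+1}$ and $a_{k+2}$. The terms $\pm 1/F_k$ and $\pm 1/F_{k+1}$ inside the bracket telescope, leaving
\begin{equation*}
\frac{A^2-1}{AF_{k+1}}+\bj{1+\frac{A^2-1}{A}}\frac{1}{F_{k+2}}+\frac{1}{F_{k+3}}.
\end{equation*}
We then use $c\,(A^2-1)/A=p+q-1$. With it, the coefficient of $1/F_{k+1}$ in the bound for $E$ becomes $-q+(p+q-1)=p-1$, and the coefficient of $1/F_{k+2}$ becomes $c+(p+q-1)$. Multiplying by $A+1>0$, the bound reads
\begin{equation*}
(A+1)E\ge -\frac{A+1}{F_k}+\frac{(p-1)(A+1)}{F_{k+1}}+\frac{(A+q)+(p+q-1)(A+1)}{F_{k+2}}+\frac{A+q}{F_{k+3}}-\frac{A+q}{F_K}-\frac{A+q}{F_{K+1}}.
\end{equation*}
The numerator $(A+q)+(p+q-1)(A+1)$ simplifies to $(p+q)A+p+2q-1$, so the right-hand side is exactly the difference of the two sides of \eqref{52}.

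Therefore \eqref{52} gives $(A+1)E>0$, which is \eqref{51}. The only delicate point is the correct direction of the alternating-sum bound, together with confirming that $K\ge k+2$ ensures both $a_{k+1}$ and $a_{k+2}$ actually occur in the sum. The remaining algebra is routine.
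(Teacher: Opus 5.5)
Your proof is correct and follows the paper's argument essentially step for step. You start from the identity \eqref{43} and use $K\ge k+2$ with Lemma \ref{l41} to minorize the alternating sum by $a_{k+1}+a_{k+2}$. You rightly settle on a non-strict minoration, since equality holds when $K=k+2$, and let \eqref{52} supply the strict inequality. After telescoping, your expression times $A+1$ is exactly the difference of the two sides of \eqref{52}; in particular $(A+q)+(p+q-1)(A+1)=(p+q)A+p+2q-1$ agrees with the paper's coefficient $p+q+\frac{q-1}{A+1}$ multiplied by $A+1$.
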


\begin{proof}
Let us denote by $L$ the left hand side of \eqref{51}.
By the identity \eqref{43} we have
\begin{equation*}
L=-\frac{1}{F_k}-\frac{q}{F_{k+1}}
+\frac{(p+q-1)A}{A^2-1}\bj{\frac{1}{F_k}+\frac{1}{F_{k+1}}-\frac{1}{F_K}-\frac{1}{F_{K+1}}+\sum_{i=k+1}^{K}a_i}.
\end{equation*}
Since $K\ge k+2$, Lemma \ref{l41} gives the following relations:
\begin{align*}
L
&\ge -\frac{1}{F_k}-\frac{q}{F_{k+1}}
+\frac{(p+q-1)A}{A^2-1}\bj{\frac{1}{F_k}+\frac{1}{F_{k+1}}-\frac{1}{F_K}-\frac{1}{F_{K+1}}+a_{k+1}+a_{k+2}}\\
&=-\frac{1}{F_k}-\frac{q}{F_{k+1}}
+\frac{(p+q-1)A}{A^2-1}\bj{\frac{1}{F_{k+2}}+\frac{1}{F_{k+3}}-\frac{1}{F_K}-\frac{1}{F_{K+1}}+\frac{A^2-1}{AF_{k+1}}+\frac{A^2-1}{AF_{k+2}}}\\
&=-\frac{1}{F_k}+\frac{p-1}{F_{k+1}}
+\bj{p+q+\frac{q-1}{A+1}}\frac{1}{F_{k+2}}
+\frac{A+q}{A+1}\bj{\frac{1}{F_{k+3}}-\frac{1}{F_K}-\frac{1}{F_{K+1}}}.
\end{align*}

Hence \eqref{51} is satisfied if the last expression is $\>0$, and this is equivalent to \eqref{52}.
\end{proof}

Now we estimate the left and right hand sides of \eqref{52}.
Similarly to the preceding section, we introduce the number 
\begin{equation*}
t:=-\frac{B}{A}=\frac{q}{A^2}\in (0,1),
\end{equation*}
and we recall from \eqref{44} the identity
\begin{equation}\label{53}
\frac{1}{(1+t)F_j}
=\frac{A}{A^j(1-(-t)^j)}
=A^{1-j}\bj{1+(-t)^j+(-t)^{2j}+(-t)^{3j}+(-t)^{4j}+\cdots}
\end{equation}
for all $j\ge 1$.

\begin{lemma}\label{l52}
The left hand side $L_k$ of \eqref{52} satisfies the following inequality:
\begin{multline}\label{54}
L_k>(1+t)q^kA^{-3k-2}\left[
\bj{p^3+p^2+3pq}A+\bj{p^2q+2pq+q^2-q}-(p+q)q^2A^{-2}
\right.\\
\left.
-(p+2q-1)q^2A^{-3}-q^k\bj{A^{4-2k}+A^{3-2k}}
\right].
\end{multline}
\end{lemma}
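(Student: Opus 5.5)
The plan is to expand each of the four reciprocals in the left hand side
\begin{equation*}
L_k=-\frac{A+1}{F_k}+\frac{(p-1)(A+1)}{F_{k+1}}+\frac{(p+q)A+p+2q-1}{F_{k+2}}+\frac{A+q}{F_{k+3}}
\end{equation*}
of \eqref{52} by the series \eqref{53}. Then we truncate each series in the direction dictated by the sign of its coefficient, and finally reduce the resulting polynomial expression in $A$ with the relation $A^2=pA+q$. Since $k$ is odd, the indices $k$ and $k+2$ are odd while $k+1$ and $k+3$ are even. So the series for $1/F_k$ and $1/F_{k+2}$ are Leibniz-type alternating series, and those for $1/F_{k+1}$ and $1/F_{k+3}$ have only positive terms.

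\textbf{Step 1: truncations.} I will use the following bounds, all valid because $0<t<1$.
\begin{itemize}
\item The coefficient of $1/F_k$ is negative, so I need an upper bound for $1-t^k+t^{2k}-\cdots$. The Leibniz estimate gives $1-t^k+t^{2k}-\cdots<1-t^k+t^{2k}$.
\item For $1/F_{k+2}$, with positive coefficient, I use the lower bound $1-t^{k+2}+\cdots>1-t^{k+2}$.
\item For the even indices I use $1+t^{k+1}+\cdots>1+t^{k+1}$ for $1/F_{k+1}$, and simply $1+t^{k+3}+\cdots>1$ for $1/F_{k+3}$, dropping a positive term.
\end{itemize}
These give a strict lower bound $L_k/(1+t)>\Sigma_0+\Sigma_1$. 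Here $\Sigma_0$ collects the leading terms:
\begin{equation*}
\Sigma_0=-(A+1)A^{1-k}+(p-1)(A+1)A^{-k}+\bigl((p+q)A+p+2q-1\bigr)A^{-k-1}+(A+q)A^{-k-2}.
\end{equation*}
The term $\Sigma_1$ collects the $t$-corrections.

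\textbf{Step 2: the leading part vanishes.} Multiplying $\Sigma_0$ by $A^{k+2}$ gives
\begin{equation*}
-A^4+(p-2)A^3+(2p+q-1)A^2+(p+2q)A+q.
\end{equation*}
Substitute $A^4=pA^3+qA^2$, then $A^3=pA^2+qA$, then $A^2=pA+q$. These substitutions reduce this polynomial successively to $-A^2+pA+q=0$. Hence $\Sigma_0=0$. This cancellation is the analogue of the one used in Section \ref{s4}, and it is the reason the main terms disappear.

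\textbf{Step 3: the correction terms.} Substituting $t=qA^{-2}$ and factoring out $q^kA^{-3k-2}$, each correction term matches one term of the bracket in \eqref{54}.
\begin{itemize}
\item $(A+1)A^{1-k}t^k$ becomes $(A+1)A^3$.
\item $(p-1)(A+1)A^{-k}t^{k+1}$ becomes $(p-1)q(A+1)$.
\item $-\bigl((p+q)A+p+2q-1\bigr)A^{-k-1}t^{k+2}$ becomes $-(p+q)q^2A^{-2}-(p+2q-1)q^2A^{-3}$.
\item $-(A+1)A^{1-k}t^{2k}$ becomes $-q^k\bigl(A^{4-2k}+A^{3-2k}\bigr)$.
\end{itemize}
It remains to reduce $A^4+A^3+(p-1)qA+(p-1)q$. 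Using $A^3=(p^2+q)A+pq$ and $A^4=(p^3+2pq)A+p^2q+q^2$, it equals
\begin{equation*}
(p^3+p^2+3pq)A+(p^2q+2pq+q^2-q).
\end{equation*}
This is exactly the bracket in \eqref{54}, and multiplying back by $1+t$ gives the lemma.

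The only delicate point is bookkeeping: choosing the correct truncation direction for each series, and making sure the $t^{2k}$ term is kept for $1/F_k$ while all higher-order positive terms are discarded. Strictness of the inequality comes from the strict Leibniz bounds and the dropped positive tail.
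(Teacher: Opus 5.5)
Your proposal is correct and follows the paper's proof essentially step by step. You expand each reciprocal via \eqref{53}, cancel the leading part using $A^2=pA+q$, and keep $t^k-t^{2k}$, $t^{k+1}$ and $-t^{k+2}$ from the Leibniz and positive series. You then substitute $t=qA^{-2}$ and reduce $A^4+A^3+(p-1)q(A+1)$, exactly as in the paper. The only difference is cosmetic: you truncate the series before cancelling the leading terms, whereas the paper cancels first and then minorizes what remains.
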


\begin{proof}
Using \eqref{53} we may rewrite $L_k$ in the following form (we recall that $k$ is odd):
\begin{align*}
\frac{L_k}{1+t}=A^{1-k}(A+1)&\bj{-1+t^k-t^{2k}+t^{3k}-t^{4k}+\cdots}\\
&+\bj{p-1}A^{-k}(A+1)\bj{1+t^{k+1}+t^{2k+2}+t^{3k+3}+t^{4k+4}+\cdots }\\
&+\bj{(p+q)A+p+2q-1}A^{-k-1}\bj{1-t^{k+2}+t^{2k+4}-t^{3k+6}+t^{4k+8}+\cdots}\\
&+\bj{A+q}A^{-k-2}\bj{1+t^{k+3}+t^{2k+6}+t^{3k+9}+t^{4k+12}+\cdots }.
\end{align*}
Since $A^2=pA+q$, here we have
\begin{align*}
-A^{1-k}&(A+1)+(p-1)A^{-k}(A+1)+\bj{(p+q)A+p+2q-1}A^{-k-1}+\bj{A+q}A^{-k-2}\\
&=A^{-k-2}\left[-A^4-A^3+(p-1)\bj{A^3+A^2}+(p+q)A^2+\bj{p+2q-1}A+\bj{A+q}\right]\\
&=A^{-k-2}\left[-A^4+(p-2)A^3+(2p+q-1)A^2+(p+2q)A+q\right]\\
&=A^{-k-2}\left[-2A^3+(2p-1)A^2+(p+2q)A+q\right]\\
&=A^{-k-2}\left[-A^2+pA+q\right]\\
&=0.
\end{align*}
Therefore the preceding identity simplifies to
\begin{align*}
\frac{L_k}{1+t}=A^{1-k}(A+1)&\bj{t^k-t^{2k}+t^{3k}-t^{4k}+\cdots}\\
&+(p-1)A^{-k}(A+1)\bj{t^{k+1}+t^{2k+2}+t^{3k+3}+t^{4k+4}+\cdots }\\
&+\left[(p+q)A+p+2q-1\right] A^{-k-1}\bj{-t^{k+2}+t^{2k+4}-t^{3k+6}+t^{4k+8}+\cdots}\\
&+\bj{A+q}A^{-k-2}\bj{t^{k+3}+t^{2k+6}+t^{3k+9}+t^{4k+12}+\cdots }.
\end{align*}
Since $0<t<1$, by the standard minoration of Leibniz type alternating series we infer that
\begin{equation*}
\frac{L_k}{1+t}>A^{1-k}(A+1)\bj{t^k-t^{2k}}+(p-1)A^{-k}(A+1)t^{k+1}-\left[(p+q)A+p+2q-1\right]A^{-k-1}t^{k+2},
\end{equation*}
or equivalently that
\begin{equation*}
\frac{L_kA^{k+2}}{(1+t)t^k}>(A^4+A^3)\bj{1-t^k}+(p-1)(A^3+A^2)t-\left[(p+q)A+p+2q-1\right]At^2.
\end{equation*}
Using the equality $t=qA^{-2}$, and then repeatedly the identity $A^2=pA+q$, we infer the following relations:
\begin{align*}
\frac{L_kA^{3k+2}}{(1+t)q^k}&+q^k\bj{A^{4-2k}+A^{3-2k}}\\
&>A^4+A^3+(p-1)q(A+1)-(p+q)q^2A^{-2}-(p+2q-1)q^2A^{-3}\\
&=(p+1)A^3+qA^2+(p-1)qA+(p-1)q-(p+q)q^2A^{-2}-(p+2q-1)q^2A^{-3}\\
&=\bj{p^2+p+q}A^2+2pqA+(p-1)q-(p+q)q^2A^{-2}-(p+2q-1)q^2A^{-3}\\
&=\bj{p^3+p^2+3pq}A+\bj{p^2q+2pq+q^2-q}-(p+q)q^2A^{-2}-(p+2q-1)q^2A^{-3}.
\end{align*}
This is equivalent to \eqref{54}.
\end{proof}

\begin{lemma}\label{l53}
The right hand side $R_K$ of \eqref{52} satisfies the following inequalities:
\begin{equation*}
R_K<(1+t)A^{-K}(A+q)(A+1+A^{1-K})<(1+t)A^{-K}(A+q)(A+2).
\end{equation*}
\end{lemma}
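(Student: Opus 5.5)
Lemma \ref{l53} is essentially a restatement of Lemma \ref{l44}, exactly as Lemma \ref{l45} was, so the plan is to reduce $R_K$ to the quantity $T_K$ and then quote the upper bounds proved there.

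First I rewrite $R_K$ using the identity \eqref{53}. For $j=K$ and $j=K+1$ it gives
\begin{equation*}
\frac{1}{F_K}=(1+t)\frac{A^{1-K}}{1-(-t)^K}
\qtq{and}
\frac{1}{F_{K+1}}=(1+t)\frac{A^{-K}}{1-(-t)^{K+1}}.
\end{equation*}
Factoring out $(1+t)A^{-K}(A+q)$ then yields
\begin{equation*}
R_K=(A+q)\bj{\frac{1}{F_K}+\frac{1}{F_{K+1}}}
=(1+t)A^{-K}(A+q)\bj{\frac{A}{1-(-t)^K}+\frac{1}{1-(-t)^{K+1}}}
=(1+t)A^{-K}(A+q)\,T_K,
\end{equation*}
where $T_K$ is the quantity defined in Lemma \ref{l44}. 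This is the same computation as in the proof of Lemma \ref{l45}, except that here the factor $(A+1)$ is absent from the denominator.

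Next I check that Lemma \ref{l44} applies, which needs $K\ge 2$. By Remark \ref{r16} (ii), $K=K_2(k)\ge k+2\ge 3$, so this holds. Lemma \ref{l44} gives
\begin{equation*}
T_K<A+1+A^{1-K}<A+2.
\end{equation*}
Multiplying by the positive factor $(1+t)A^{-K}(A+q)$ gives both claimed inequalities, since $t>0$, $A>0$ and $q\ge1$.

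No step is a real obstacle. The only points needing care are getting the exponent shift in \eqref{53} right, namely $A^{1-j}$ rather than $A^{-j}$, and confirming the hypothesis $K\ge 2$ of Lemma \ref{l44} via Remark \ref{r16} (ii).
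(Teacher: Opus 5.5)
Your proposal is correct and matches the paper's proof. Both use \eqref{53} to write $R_K=(1+t)A^{-K}(A+q)T_K$ and then apply the upper bounds of Lemma \ref{l44}. Your explicit check that $K=K_2(k)\ge 3$ via Remark \ref{r16} (ii), which the paper leaves implicit in this proof, is a reasonable addition.
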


\begin{proof}
Applying \eqref{53} we obtain that
\begin{equation*}
\frac{R_K}{1+t}
=A^{-K}(A+q)\bj{\frac{A}{1-(-t)^K}+\frac{1}{1-(-t)^{K+1}}}
=:A^{-K}(A+q)T_K
\end{equation*}
We conclude by applying Lemma \ref{l44}.
\end{proof}

We recall that the validity of the upper estimate of Theorem \ref{t13} follows from the estimate \eqref{51}.
In order to prove the latter we first give a technical lemma:

\begin{lemma}\label{l54}
Let $\eps=1$ or $\eps=A^{1-K}$.
If 
\begin{multline}\label{55}
(p+q-1)
\left[
\bj{p^3+p^2+3pq}A+\bj{p^2q+2pq+q^2-q}-(p+q)q^2A^{-2}
\right.\\
\left.
-(p+2q-1)q^2A^{-3}-q^k\bj{A^{4-2k}+A^{3-2k}}
\right]
>q^2A(A+q)(A+1+\eps).
\end{multline}
then the inequality \eqref{51} is satisfied.
\end{lemma}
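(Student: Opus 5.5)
The plan is to chain the lemmas already established: by Lemma \ref{l51}, the inequality \eqref{51} follows from \eqref{52}, i.e.\ from $L_k>R_K$ with $K=K_2(k)$. I will therefore bound $L_k$ from below by Lemma \ref{l52}, bound $R_K$ from above by Lemma \ref{l53}, and then use the defining property of $K_2(k)$ to compare the two bounds.

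First, I fix the admissible values of $\eps$. Lemma \ref{l53} gives
\begin{equation*}
R_K<(1+t)A^{-K}(A+q)\bj{A+1+A^{1-K}}.
\end{equation*}
Since $A>1$ and $K\ge 2$, we have $A^{1-K}<1$, so the same bound holds with $A^{1-K}$ replaced by $1$. Hence, for either choice of $\eps$,
\begin{equation*}
R_K<(1+t)A^{-K}(A+q)(A+1+\eps).
\end{equation*}
Let $[\cdots]$ denote the bracket that appears both in \eqref{54} and in \eqref{55}. Lemma \ref{l52} gives
\begin{equation*}
L_k>(1+t)q^kA^{-3k-2}[\cdots].
\end{equation*}
It therefore suffices to prove
\begin{equation*}
q^kA^{-3k-2}[\cdots]\ge A^{-K}(A+q)(A+1+\eps),
\end{equation*}
which after multiplying by $A^K$ becomes
\begin{equation*}
q^kA^{K-3k-2}[\cdots]\ge (A+q)(A+1+\eps).
\end{equation*}

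Next, I use the definition of $K=K_2(k)$. By Remark \ref{r14}, $q^{k+2}A^{K-3k-1}\ge p+q-1$, which is equivalent to
\begin{equation*}
q^kA^{K-3k-2}\ge \frac{p+q-1}{q^2A}.
\end{equation*}
To multiply this inequality by $[\cdots]$, I need the bracket to be positive, and this is where the hypothesis enters. The right hand side of \eqref{55} is positive and $p+q-1>0$, so \eqref{55} forces $[\cdots]>0$. Multiplying then gives
\begin{equation*}
q^kA^{K-3k-2}[\cdots]\ge \frac{(p+q-1)[\cdots]}{q^2A}>(A+q)(A+1+\eps),
\end{equation*}
where the last step is exactly \eqref{55} divided by $q^2A$.

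Combining the two estimates gives $L_k>R_K$, so \eqref{52} holds, and \eqref{51} follows by Lemma \ref{l51}. The only delicate points are the sign of the bracket, which is supplied by \eqref{55} itself, and the correct direction of the inequality coming from $K_2(k)$. Using the \emph{minimum} characterization of $K_2(k)$ in Remark \ref{r14} gives the lower bound on $A^{K}$ that is needed here.
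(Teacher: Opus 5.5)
Your proposal is correct and matches the paper's proof: you chain Lemmas \ref{l51}--\ref{l53}, clear the powers of $A$, and use the minimum characterization of $K_2(k)$ from Remark \ref{r14}. You also note explicitly that \eqref{55} forces the bracket to be positive, which is needed before multiplying by $q^{k+2}A^{K-3k-1}\ge p+q-1$; the paper leaves this step implicit.
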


\begin{proof}
In view of Lemmas \ref{l51}--\ref{l53} it is sufficient to prove the inequality
\begin{multline*}
q^kA^{-3k-2}
\left[
\bj{p^3+p^2+3pq}A+\bj{p^2q+2pq+q^2-q}-(p+q)q^2A^{-2}
\right.\\
\left.
-(p+2q-1)q^2A^{-3}-q^k\bj{A^{4-2k}+A^{3-2k}}
\right]
>A^{-K}(A+q)(A+1+\eps),
\end{multline*}
which we may also write in the following form:
\begin{multline*}
q^{k+2}A^{K-3k-1}
\left[
\bj{p^3+p^2+3pq}A+\bj{p^2q+2pq+q^2-q}-(p+q)q^2A^{-2}
\right.\\
\left.
-(p+2q-1)q^2A^{-3}-q^k\bj{A^{4-2k}+A^{3-2k}}
\right]
>q^2A(A+q)(A+1+\eps).
\end{multline*}
In view of Remark \ref{r14}, it is enough to prove \eqref{55}.
\end{proof}

\begin{lemma}\label{l55}
The inequality \eqref{55} of Lemma \ref{l54} is equivalent to the following one:
\begin{equation}\label{56}
\begin{split}
&\left[(p^2+pq+2q)(p^2-q^2)+(2-\eps)pq^2-\eps q^3+2p^2q-p^2-3pq\right]A\\
&\qquad +\left[(pq+q^2)(p^2-q^2)+pq^2-\eps q^3+p^2q+2pq^2-3pq-2q^2+q\right]\\
&\qquad -(p+q-1)(p+q)q^2A^{-2}-(p+q-1)(p+2q-1)q^2A^{-3}\\
&\qquad >(p+q-1)q^k\bj{A^{4-2k}+A^{3-2k}}.
\end{split}
\end{equation}
\end{lemma}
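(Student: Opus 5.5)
The plan is to move everything in \eqref{55} to the left-hand side and reduce the resulting expression, using only $A^2=pA+q$, to the same normal form as \eqref{56}. That normal form is an affine function $\alpha A+\beta$, plus the untouched terms in $A^{-2}$ and $A^{-3}$, plus the $q^k$-term. The two inequalities then differ only by a rearrangement, so they are equivalent. Since the reduction step is an identity valid for every real $\eps$, the argument covers both choices $\eps=1$ and $\eps=A^{1-K}$ at once.

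First I will leave the terms $-(p+q-1)(p+q)q^2A^{-2}$, $-(p+q-1)(p+2q-1)q^2A^{-3}$ and $-(p+q-1)q^k\bigl(A^{4-2k}+A^{3-2k}\bigr)$ on the left of \eqref{55} exactly as they are. They already appear verbatim in \eqref{56}; the last one appears moved to the right-hand side. The remaining part of the left side of \eqref{55} is
\[
(p+q-1)(p^3+p^2+3pq)A+(p+q-1)(p^2q+2pq+q^2-q),
\]
which is already of the form $\alpha A+\beta$.

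Next I will reduce the right-hand side $q^2A(A+q)(A+1+\eps)=q^2\bigl[A^3+(q+1+\eps)A^2+q(1+\eps)A\bigr]$. Applying $A^2=pA+q$ twice gives $A^3=(p^2+q)A+pq$. Substituting, the right-hand side becomes
\[
q^2\bigl[p^2+q+p(q+1+\eps)+q(1+\eps)\bigr]A+q^2\bigl[pq+q(q+1+\eps)\bigr].
\]
Subtracting this from the left-hand side and moving the $q^k$-term to the right turns \eqref{55} into an inequality of the shape of \eqref{56}.

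The only real step, and the place where an error is most likely, is the final coefficient comparison. I need to check two polynomial identities in $p,q,\eps$:
\[
(p+q-1)(p^3+p^2+3pq)-q^2\bigl(p^2+pq+p+2q+(p+q)\eps\bigr)
\]
must equal $(p^2+pq+2q)(p^2-q^2)+(2-\eps)pq^2-\eps q^3+2p^2q-p^2-3pq$, and
\[
(p+q-1)(p^2q+2pq+q^2-q)-q^2(pq+q^2+q+q\eps)
\]
must equal $(pq+q^2)(p^2-q^2)+pq^2-\eps q^3+p^2q+2pq^2-3pq-2q^2+q$. I will verify each by full expansion, comparing monomials degree by degree and separating the $\eps$-terms first, which are $-pq^2\eps-q^3\eps$ and $-q^3\eps$ respectively. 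Once both identities hold, \eqref{55} and \eqref{56} are the same inequality written in two forms, which proves the lemma.
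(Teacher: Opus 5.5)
Your proposal is correct and follows the paper's proof: you reduce $q^2A(A+q)(A+1+\eps)$ to the linear form $q^2\bigl(p^2+pq+p+2q+(p+q)\eps\bigr)A+q^2(pq+q^2+q+q\eps)$ using $A^2=pA+q$, and then check the same two coefficient identities the paper checks. Both identities do hold on full expansion, so \eqref{55} and \eqref{56} are the same inequality.
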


\begin{proof}
Since the right hand side of \eqref{55} is equal to
\begin{align*}
q^2A^3&+q^2(q+1+\eps)A^2+(1+\eps)q^3A\\
&=\bj{pq^2+q^3+(1+\eps)q^2}A^2+(2+\eps)q^3A\\
&=\bj{p^2q^2+pq^3+(1+\eps)pq^2+(2+\eps)q^3}A+\bj{pq^3+q^4+(1+\eps)q^3},
\end{align*}
\eqref{55} may be rewritten in the form
\begin{align*}
&\bj{(p+q-1)(p^3+p^2+3pq)-\bj{p^2q^2+pq^3+(1+\eps)pq^2+(2+\eps)q^3}}A\\
&\qquad +\bj{(p+q-1)(p^2q+2pq+q^2-q)-\bj{pq^3+q^4+(1+\eps)q^3}}\\
&\qquad -(p+q-1)(p+q)q^2A^{-2}-(p+q-1)(p+2q-1)q^2A^{-3}\\
&\qquad >(p+q-1)q^k\bj{A^{4-2k}+A^{3-2k}}.
\end{align*} 
Simplifying the last expression we obtain \eqref{56} because
\begin{align*}
(p+q-1)&(p^3+p^2+3pq)-\bj{p^2q^2+pq^3+(1+\eps)pq^2+(2+\eps)q^3}\\
&=p^4+p^3+3p^2q+p^3q+p^2q+3pq^2-p^3-p^2-3pq-p^2q^2-pq^3\\
&\qquad -(1+\eps)pq^2-(2+\eps)q^3\\
&=\bj{p^4+p^3q-p^2q^2-pq^3}+\bj{4p^2q+(2-\eps)pq^2-(2+\eps)q^3}-\bj{p^2+3pq}\\
&=(p^2+pq+2q)(p^2-q^2)+(2-\eps)pq^2-\eps q^3+2p^2q-p^2-3pq
\end{align*}
and
\begin{align*}
(p+q-1)&(p^2q+2pq+q^2-q)-(pq^3+q^4+(1+\eps)q^3)\\
&=p^3q+2p^2q+pq^2-pq+p^2q^2+2pq^2+q^3-q^2\\
&\qquad -p^2q-2pq-q^2+q-pq^3-q^4-(1+\eps)q^3\\
&=(pq+q^2)(p^2-q^2)+pq^2-\eps q^3+p^2q+2pq^2-2q^2+q-3pq.\qedhere
\end{align*}
\end{proof}

Now we are ready to prove the upper estimate of Theorem \ref{t13}.
We recall from Example \ref{e15} that Theorem \ref{t13} holds for $p=q=2$ and $k=1$.
In the remaining cases, in view of Lemmas \ref{l54} and \ref{l55}, it is sufficient to prove the estimate \eqref{56} with $\eps=1$ or with $\eps=A^{1-K}$.
We distinguish the  following three cases:
\begin{equation*}
p>q,\quad p=q\ge 3\qtq{and}p=q=2, k\ge 3.
\end{equation*}

\begin{proof}[Proof of \eqref{56} with $\eps=1$ for all $k\ge 1$ when $p>q$]
Since $p>q$ and $p,q$ are integers, we have $p-q\ge 1$ and $p^2-q^2\ge p+q$, so that the coefficient of $A$ and the following term in \eqref{56} may be minorized as follows:
\begin{align*}
(p^2+pq+2q)&\bj{p^2-q^2}+(p-q)q^2+2p^2q-p^2-3pq\\
&\ge (p^2+pq+2q)(p+q)+q^2+2p^2q-p^2-3pq\\
&=p^3+4p^2q-pq+pq^2+3q^2-p^2,
\end{align*}
and 
\begin{align*}
\bj{pq+q^2}&\bj{p^2-q^2}+(p-q)q^2+p^2q+2pq^2-3pq-2q^2+q\\
&\ge\bj{pq+q^2}(p+q)+q^2+p^2q+2pq^2-3pq-2q^2+q\\
&=2p^2q+4pq^2+q^3-3pq-q^2+q.
\end{align*}
Therefore it is sufficient to show that
\begin{multline*}
\bj{p^3+4p^2q-pq+pq^2+3q^2-p^2}A+\bj{2p^2q+4pq^2+q^3-3pq-q^2+q}\\
>(p+q-1)\left[(p+q)q^2A^{-2}+(p+2q-1)q^2A^{-3}+q^k\bj{A^{4-2k}+A^{3-2k}}\right].
\end{multline*}
Since $0<q<p<A$ and $k\ge 1$, on the right hand side we have
\begin{align*}
(p+q-1)(p+q)q^2A^{-2}&<(p+q-1)(p+q),\\
(p+q-1)(p+2q-1)q^2A^{-3}&<3(p+q-1),\\
(p+q-1)q^k\bj{A^{4-2k}+A^{3-2k}}&\le (p+q-1)q\bj{A^2+A}=(p+q-1)q(pA+q+A)\\
&=\bj{p^2q+pq^2+q^2-q}A+(p+q-1)q^2,
\end{align*}
so that it is sufficient to show the following inequality:
\begin{multline*}
\bj{p^3+4p^2q-pq+pq^2+3q^2-p^2}A+\bj{2p^2q+4pq^2+q^3-3pq-q^2+q}\\
>\bj{p^2q+pq^2+q^2-q}A+(p+q-1)\bj{q^2+p+q+3}.
\end{multline*}
This is equivalent to
\begin{equation*}
\bj{p^3+3p^2q-pq+2q^2-p^2+q}A+\bj{2p^2q+4pq^2+q^3-3pq-q^2+q}>(p+q-1)(p+q+3).
\end{equation*}
Since $A>p$, it is sufficient to show that
\begin{equation*}
\bj{p^3+3p^2q-pq+2q^2-p^2+q}p+\bj{2p^2q+4pq^2+q^3-3pq-q^2+q}>(p+q-1)(p+q+3),
\end{equation*}
and this is equivalent by a direct computation to the following inequality:
\begin{equation*}
p^4+3p^3q+5p^2q+pq^2+2p+3q+3>p^3+p^2+4pq+q^2.
\end{equation*}
We complete the proof by observing that all terms are positive on both sides, and
\begin{equation*}
p^4>p^3,\quad 3p^3q>p^2,\quad 5p^2q>4pq\qtq{and} pq^2>q^2.\qedhere
\end{equation*}
\end{proof}

Now we turn to the case $p=q$.
Then the inequality \eqref{56}  of Lemma \ref{l55} reduces to
\begin{multline}\label{57}
[(4-2\eps)p^3-4p^2]A+(4-\eps)p^3-5p^2+p\\
>(2p-1)\left[2p^3A^{-2}+(3p^3-p^2)A^{-3}+p^k\bj{A^{4-2k}+A^{3-2k}}\right],
\end{multline}
so that the validity of \eqref{57} implies the validity of the upper estimate in Theorem \ref{t13} for the corresponding values of $p=q$ and $k$.

\begin{proof}[Proof of \eqref{57} with $\eps=A^{1-K}$ for all $k\ge 1$ when $p=q\ge 3$]
Since $A>p>0$ and $3p^3-p^2>0$, for $k\ge 1$ we have
\begin{equation*}
2p^3A^{-2}+(3p-1)p^2A^{-3}+p^k\bj{A^{4-2k}+A^{3-2k}}\le 2p+3+pA^2+pA=p^2+2p+3+(p^2+p)A.
\end{equation*}
Therefore, since $2p-1>0$, it suffices to prove that
\begin{equation*}
[(4-2\eps)p^3-4p^2]A+(4-\eps)p^3-5p^2+p
>(2p-1)\bj{p^2+2p+3+(p^2+p)A}.
\end{equation*}
This is equivalent to
\begin{equation*}
[2p^3-5p^2+p]A+[2p^3-8p^2-3p+3]>\eps p^3(2A+1).
\end{equation*}
Since $K\ge 3$ by Remark \ref{r16} (ii), $\eps\le A^{-2}$, and it is sufficient to prove the following, stronger inequality:
\begin{equation*}
[2p^3-5p^2+p]A+[2p^3-8p^2-3p+3]>2p^3A^{-1}+p^3A^{-2},
\end{equation*}
which is equivalent to
\begin{equation*}
[2p^3-5p^2+p]A^3+[2p^3-8p^2-3p+3]A^2-2p^3A-p^3>0.
\end{equation*}
We prove this inequality by using the identities
\begin{equation*}
A^2=pA+p\qtq{and}A^3=p(pA+p)+pA=(p^2+p)A+p^2
\end{equation*}
and our assumption $p\ge 3$ as follows:
\begin{align*}
[2p^3-5p^2+p]A^3&+[2p^3-8p^2-3p+3]A^2-2p^3A-p^3\\
&=[2p^3-5p^2+p][(p^2+p)A+p^2]+[2p^3-8p^2-3p+3](pA+p)-2p^3A-p^3\\
&=[2p^5-p^4-14p^3-2p^2+3p]A+[2p^5-3p^4-8p^3-3p^2+3p]\\
&=p^5A\bj{2-\frac{1}{p}-\frac{14}{p^2}-\frac{2}{p^3}+\frac{3}{p^4}}+p^5\bj{2-\frac{3}{p}-\frac{8}{p^2}-\frac{3}{p^3}+\frac{3}{p^4}}\\
&>p^5A\bj{2-\frac{1}{3}-\frac{14}{3^2}-\frac{2}{3^3}}+p^5\bj{2-\frac{3}{3}-\frac{8}{3^2}-\frac{3}{3^3}}\\
&=\frac{p^5A}{27}>0.\qedhere
\end{align*}
\end{proof}

\begin{proof}[Proof of \eqref{57} with $\eps=A^{1-K}$ for all $k\ge 3$ when $p=q=2$]
For $p=2$ inequality \eqref{57} becomes
\begin{equation*}
(16-16\eps)A+(14-8\eps)>48A^{-2}+60A^{-3}+3\cdot 2^k\bj{A^{4-2k}+A^{3-2k}}.
\end{equation*}
Since $2/A<1$, the right hand side is decreasing as $k$ increases.
Therefore, in view of Example \ref{e15}, it is sufficient to prove the following inequality, with $k$ replaced  by $3$.
\begin{equation*}
(16-16\eps)A+(14-8\eps)>48A^{-2}+60A^{-3}+24\bj{A^{-2}+A^{-3}}.
\end{equation*}
Since $\eps\le A^{-2}$, it is sufficient to show that
\begin{equation*}
16A-16A^{-1}+14-8A^{-2}>48A^{-2}+60A^{-3}+24\bj{A^{-2}+A^{-3}}.
\end{equation*}
Multiplying by $A^3/2$ and simplifying this may be written in the following equivalent form:
\begin{equation*}
8A^4+7A^3-8A^2-40A-42>0.
\end{equation*}
The last inequality may be proved directly, by using repeatedly the identity $A^2=2A+2$ as follows:
\begin{align*}
8A^4+7A^3-8A^2-40A-42 = 
&=23A^3+8A^2-40A-42\\
&=54A^2+6A-42\\
&=114A+66>0.
\qedhere
\end{align*}
\end{proof}

\end{document}